\newcommand{\pattern}{family{}}
\newcommand{\monochromatic}{Ramsey}
\renewcommand{\patterns}{families{}}
\newcommand{\F}{\mathbb{F}}
\newcommand{\Q}{\mathbb{Q}}
\newcommand{\Z}{\mathbb{Z}}
\newcommand{\N}{\mathbb{N}}
\newcommand{\AR}{{\mathcal A}_R}
\newcommand{\AN}{{\mathcal A}_\mathbb{N}}
\newtheorem{theorem}{Theorem}[section]
\newtheorem{definition}[theorem]{Definition}
\newtheorem{example}[theorem]{Example}
\newtheorem{proposition}[theorem]{Proposition}
\newtheorem{question}[theorem]{Question}
\newtheorem{remark}[theorem]{Remark}
\newtheorem{lemma}[theorem]{Lemma}
\newtheorem{problem}[theorem]{Problem}
\newtheorem{corollary}[theorem]{Corollary}
\renewcommand{\emptyset}{\varnothing}
\theoremstyle{plain}
\newtheorem*{namedthm}{\namedthmname}
\newcounter{namedthm}
	\newenvironment{named}[2]
	{\def\namedthmname{#1}
	\refstepcounter{namedthm}
	\namedthm[#2]\def\@currentlabel{#1}}
	{\endnamedthm}
\begin{document}

\author{Joel Moreira}
\email{moreira@math.ohio-state.edu}
\title{Monochromatic sums and products in $\N$}
\address{\small Department of Mathematics, The Ohio State University, Columbus, Ohio, USA}
\begin{abstract}An old question in Ramsey theory asks whether any finite coloring of the natural numbers admits a monochromatic pair $\{x+y,xy\}$.
We answer this question affirmatively in a strong sense by 
exhibiting a large new class of non-linear patterns which can be found in a single cell of any finite partition of $\N$.
Our proof involves a correspondence principle which transfers the problem into the language of topological dynamics.
As a corollary of our main theorem we obtain partition regularity for new types of equations, such as $x^2-y^2=z$ and $x^2+2y^2-3z^2=w$.
\end{abstract}
\maketitle

\section{Introduction}
In this paper we show that for any finite coloring (i.e. partition) of $\N=\{1,2,\dots\}$ there exist $x,y\in\N$ such that the set $\{x,x+y,xy\}$ is monochromatic.
In fact, we exhibit a rather large class of configurations with this property.
\subsection{Historical background and motivation}
A central topic in Ramsey theory is to understand which patterns can be found in one color of any finite coloring of the natural numbers.
We start with a definition:
\begin{definition}\label{def_ramsey}Let $k,s\in\N$, and let $f_1,\dots,f_k:\N^s\to\Z$.
We say that $\{f_1,\dots,f_k\}$ is a \emph{Ramsey \pattern{}} if for any finite coloring $\N=C_1\cup\cdots\cup C_r$, there exist ${\bf x}\in\N^s$ and $i\in\{1,\dots,r\}$ such that $\big\{f_1({\bf x}),\dots,f_k({\bf x})\big\}\subset C_i$.
\end{definition}

In this language, Schur's theorem \cite{Schur16} states that the \pattern{} $\{x,y,x+y\}$ \footnote{In a slight abuse of notation, we represent by $\{x,y,x+y\}$ the family comprised of the three functions $(x,y)\mapsto x$, $(x,y)\mapsto y$ and $(x,y)\mapsto x+y$.} is \monochromatic{} and van der Waerden's theorem \cite{vdWaerden27} states that for any $k\in\N$, the \pattern{} $\{x,x+y,\dots,x+(k-1)y\}$ is \monochromatic.
On the other hand, it is not hard to show that the \patterns{} $\{x,x+1\}$ and $\{x,y,3x-y\}$ are \emph{not} \monochromatic.
In 1933, Rado obtained a fundamental theorem describing necessary and sufficient conditions for a \pattern{} of linear functions to be \monochromatic{} \cite{Rado33}.
Inspired by Rado's result, we are led naturally to the following, by now classical, problem.
\begin{problem}\label{problem_polRado}
  Describe necessary and sufficient conditions on the polynomials $f_1,\dots,f_k\in\Z[x_1,\dots,x_s]$ that guarantee that the \pattern{} $\{f_1,\dots,f_k\}$ is \monochromatic.
\end{problem}

It follows from Schur's theorem that the \pattern{} $\{x,y,xy\}$ is \monochromatic{} (simply compose any given coloring $\chi:\N\to\{1,\dots,r\}$ with the map $n\mapsto 2^n$ to create a new coloring and apply Schur's theorem).
Using the same idea, van der Waerden's theorem implies that for each $k\in\N$ the \pattern{} $\{x,xy,\dots,xy^k\}$ is \monochromatic{}, and Rado's theorem implies that many more \patterns{} of the form $\{f_1,\dots,f_k\}$, where each $f_i$ is a monomial, are \monochromatic.

Configurations which combine both addition and multiplication, however, tend to be significantly harder to deal with: only in 1977 did Furstenberg and S\'ark\"ozy prove, independently, that the \pattern{} $\{x,x+y^2\}$ is monochromatic (cf. \cite[Theorem 1.2]{Furstenberg77} and \cite{Sarkozy78}), obtaining the first example of a non-linear \monochromatic{} \pattern{} which does not consist only of monomials.
Bergelson improved this result by showing that in fact the \pattern{} $\{x,y,x+y^2\}$ is \monochromatic{} \cite{Bergelson87b}.

The next major advance towards \cref{problem_polRado} was Bergelson and Leibman's polynomial extension of van der Waerden's theorem \cite{Bergelson_Leibman96} (see \cref{thm_polvdWoriginal} below).
In particular, they showed that for any polynomials $p_1,\dots,p_k\in\Z[x]$, the \pattern{} $\{x,x+p_1(y),\dots,x+p_k(y)\}$ is \monochromatic.
The polynomial van der Waerden theorem has now been extended in several directions (see, for instance, \cite{Bergelson_Furstenberg_McCutcheon96,Bergelson_Johnson_Moreira15,Bergelson_Leibman_Lesigne08}), each revealing new examples of polynomial \monochromatic{} \patterns.

In the last decade, many interesting polynomial Ramsey families were found \cite{Beiglbock_Bergelson_Hindman_Strauss06,Beiglbock_Bergelson_Hindman_Strauss08,Bergelson05,Frantzikinakis_Host14,McCutcheon10}, however a complete solution to \cref{problem_polRado} is still very far from reach.
In particular, the following simple question has remained unanswered for many years:

\begin{question}[cf. {\cite[Question 3]{Hindman_Imre_Strauss03}}, {\cite[Question 11]{Bergelson96}}]\label{question_mainproblem}
  Is the \pattern{} $\{x,y,x+y,xy\}$ \monochromatic?
\end{question}
This question was studied at least as early as 1979 by N. Hindman and R. Graham (see \cite[Section 4]{Hindman79} and \cite[pages 68-69]{Graham_Rothschild_Spencer80}), but even the \pattern{} $\{x+y,xy\}$ remained recalcitrant until now.
An affirmative answer to the analogue of \cref{question_mainproblem} in finite fields was recently obtained by Green and Sanders \cite{Green_Sanders15}, generalizing previous work by Shkredov \cite{Shkredov10} and Cilleruelo \cite{Cilleruelo12} (see also \cite{Vinh14} and \cite{Hanson13} for related results).

Bergelson and the author studied the analogue of \cref{question_mainproblem} for infinite fields in \cite{Bergelson_Moreira15,Bergelson_Moreira16}.
We showed, in particular, that the \pattern{} $\{x,x+y,xy\}$ is \monochromatic{} in any infinite field, and that for any finite coloring of $\Q$ there exist (many) $x\in\Q$ and $y\in\N$ such that $\{x+y,xy\}$ is monochromatic.
The methods of \cite{Bergelson_Moreira15} and \cite{Bergelson_Moreira16}, however, can not be directly used to establish that the family $\{x+y,xy\}$ is Ramsey in $\N$, the main problem being that the semigroup of affine transformations of $\N$ (which naturally appears in the dynamical approach to the problem) is not amenable.

\subsection{Main results}
The main result of this paper is the following:

\begin{theorem}\label{thm_mainN}
Let $s\in\N$ and, for each $i=1,\dots,s$, let $F_i$ be a finite set of functions $\N^i\to\Z$ such that for all $f\in F_i$ and any $x_1,\dots,x_{i-1}\in\N$, the function $x\mapsto f(x_1,\dots,x_{i-1},x)$ is polynomial with $0$ constant term.
Then for any finite coloring of $\N$ there exists a color $C\subset\N$ and (infinitely many) $(s+1)$-tuples $x_0,\dots,x_s\in\N$ such that 
$$\{x_0\cdots x_s\}\cup\Big\{x_0\cdots x_j+f(x_{j+1},\dots,x_i):0\leq j<i\leq s, f\in F_{i-j}\Big\}\subset C.$$
\end{theorem}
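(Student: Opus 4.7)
The plan is to reduce the coloring statement to a multiple recurrence problem for a continuous action of the multiplicative semigroup $(\N,\cdot)$ on a compact metric space, and then to establish that recurrence statement by induction on $s$, invoking the Bergelson--Leibman polynomial van der Waerden theorem at each inductive step to absorb the additive shifts $f(x_{j+1},\dots,x_i)$.

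The first step is a correspondence principle. Since $(\N,\cdot)$ is amenable (being isomorphic, via prime factorization, to a free commutative semigroup), standard techniques should produce a compact metric space $X$, a continuous semigroup action $(T_n)_{n\in\N}$ on $X$ with $T_{nm}=T_nT_m$, a clopen partition $X=A_1\sqcup\cdots\sqcup A_r$ reflecting the coloring, and a distinguished point $x^{*}\in X$ such that the color of $n$ is determined (in a suitable density sense) by which $A_i$ contains $T_nx^{*}$. The theorem then reduces to finding $x_0,\dots,x_s\in\N$ and an index $i$ for which the points $T_{x_0\cdots x_s}x^{*}$ and $T_{x_0\cdots x_j+f(x_{j+1},\dots,x_i)}x^{*}$ all lie in $A_i$ simultaneously.

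The induction on $s$ would proceed as follows. Having located $x_0,\dots,x_{s-1}$ satisfying the pattern at level $s-1$, one must choose $x_s\in\N$ so that the new quantities $x_0\cdots x_s$ and $x_0\cdots x_j+f(x_{j+1},\dots,x_{s-1},x_s)$ (for $0\le j<s$ and $f\in F_{s-j}$) all land in a common color $A_i$. Because every $f\in F_k$ has zero constant term in its last variable, each of these new quantities, as a function of $x_s$ alone, is either a multiplicative shift by $x_s$ or is of the form (previous value) $+$ (polynomial in $x_s$ vanishing at $0$). After lifting through the correspondence, this is exactly the setting in which Bergelson--Leibman's polynomial van der Waerden theorem produces a single $x_s=n$ simultaneously realizing all the required returns to $A_i$.

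The main obstacle, as already flagged in the introduction, is that combining addition and multiplication naturally leads to the affine semigroup of $\N$, which is non-amenable and hence outside the reach of standard correspondence and averaging arguments. The workaround is to rely solely on multiplicative amenability to set up the dynamical picture, and to re-encode each additive shift as a polynomial constraint in a single variable so that Bergelson--Leibman applies. Making this work for the full nested pattern will require a careful inductive bookkeeping: once $x_0,\dots,x_{s-1}$ are fixed, the remaining choice of $x_s$ must be made at a point in the orbit closure of $x^{*}$ that already witnesses every previously imposed constraint, and one must verify that such points exist in a syndetic (or positive-measure) sense so that the polynomial recurrence is applicable at each stage. Executing this bootstrapping cleanly is what I expect to be the technical heart of the proof.
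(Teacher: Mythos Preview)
Your proposal has a genuine gap at the correspondence step. A Furstenberg-type correspondence for the multiplicative semigroup $(\N,\cdot)$ alone yields a system $(X,(T_n))$ in which the location of $T_nx^{*}$ records $\chi(n)$, but such a system carries no $(\Z,+)$-action corresponding to $n\mapsto n+u$. The Bergelson--Leibman theorem is a statement about $\Z$-actions; in your setup there is simply no transformation on $X$ whose iterates take $T_{x_0\cdots x_j}x^{*}$ to $T_{x_0\cdots x_j+f(\cdots,x_s)}x^{*}$, so ``re-encoding each additive shift as a polynomial constraint'' has nothing to act on. Your own diagnosis---that the affine semigroup is non-amenable---is correct, but the proposed workaround (use only multiplicative amenability and absorb the addition afterwards) does not actually produce the additive action that polynomial van der Waerden needs.

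The paper resolves this by building the correspondence for the full affine semigroup $\AN^-$, not via amenability but inside $\overline{K(\beta\N,+)}\subset\beta\N$, using piecewise syndeticity in place of averaging (\cref{thm_correspondence}). With both $A_u$ and $M_u$ available on $X$, the distributivity law $M_uA_v=A_{uv}M_u$ lets multiplicative maps slide past additive ones, so that at each stage the problem reduces to genuinely additive polynomial recurrence (\cref{thm_polvdW}). The global architecture is also different from yours: it is not an induction on $s$ (which would face the difficulty that the color witnessing level $s{-}1$ need not absorb the new constraints and the new product $x_0\cdots x_s$ at level $s$), but a pigeonhole on colors. One builds a single sequence $(B_n)$ of nonempty open sets with $B_n\subset U_{t_n}$ and $B_m\subset M_{y(n,m)}A_{g}B_n$ for all $n<m$ and all relevant $g$; since $(t_n)$ takes finitely many values, some color recurs $s{+}1$ times, and the corresponding products $y(n_{j-1},n_j)$ furnish $x_1,\dots,x_s$.
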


In particular, taking $s=1$ and $F_1=\{x\mapsto0,x\mapsto x\}$ consisting only of the zero function and the identity function, we deduce
\begin{corollary}\label{cor_{x,xy,x+y}}
For any finite coloring of $\N$ there exist (infinitely many) $x,y\in\N$ such that
$\{x,xy,x+y\}$ is monochromatic.
\end{corollary}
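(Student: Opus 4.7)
The plan is to derive this corollary as an immediate specialisation of \cref{thm_mainN}. I would take $s=1$ and choose $F_1$ to consist of the two functions $\N\to\Z$ given by $f_0(x) = 0$ and $f_1(x) = x$; both are polynomials in one variable with zero constant term, so the hypothesis of \cref{thm_mainN} is met.

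Next I would unpack the resulting conclusion. The index range $0\leq j < i \leq s = 1$ forces $j=0$ and $i=1$, so the functions $f$ are drawn from $F_{i-j} = F_1$. Reading $x_0\cdots x_j$ as $x_0$ when $j=0$, the family of terms $x_0\cdots x_j + f(x_{j+1},\dots,x_i)$ collapses to $\{x_0 + f_0(x_1),\, x_0 + f_1(x_1)\} = \{x_0,\, x_0 + x_1\}$. Combined with the single product $x_0\cdots x_s = x_0 x_1$, \cref{thm_mainN} then supplies a colour $C\subset\N$ and infinitely many pairs $(x_0, x_1)$ for which $\{x_0,\, x_0 x_1,\, x_0 + x_1\}\subset C$. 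Setting $x = x_0$ and $y = x_1$ produces the \monochromatic{} set $\{x, xy, x+y\}$, and the ``infinitely many'' clause transfers verbatim.

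Since this derivation amounts to pure notational bookkeeping, there is no genuine obstacle once the indexing convention for $x_0\cdots x_0$ is pinned down; the substantive difficulty lies entirely inside the proof of \cref{thm_mainN}. The only sanity check I would perform is to confirm that the two functions $x\mapsto 0$ and $x\mapsto x$ qualify as polynomials with zero constant term in the sense required by the theorem, which is immediate.
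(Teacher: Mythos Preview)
Your proposal is correct and matches the paper's own derivation exactly: the paper states just before \cref{cor_{x,xy,x+y}} that it follows from \cref{thm_mainN} by taking $s=1$ and $F_1=\{x\mapsto 0,\ x\mapsto x\}$, and your unpacking of the resulting configuration is accurate.
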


As an illustration, setting $s=5$ in \cref{thm_mainN} and letting each $F_i$ consist only of the function $f_i:(x_1,\dots,x_i)\mapsto x_1\cdots x_i$, we obtain the following (aesthetically pleasing) \monochromatic{} \pattern{}.
\begin{example}
  The following \pattern{} is \monochromatic:
  $$\left\{\begin{array}{ccccc}x&&&&\\xy,&x+y&&&\\xyz,&x+yz,&xy+z&&\\ xyzt,&x+yzt,&xy+zt,&xyz+t&\\ xyztw,&x+yztw,&xy+ztw,&xyz+tw&xyzt+w\end{array}\right\}$$
\end{example}

\cref{thm_mainN} can also be used to obtain new partition regular equations:

\begin{corollary}\label{cor_partitionregularequationgeneral}
  Let $k\in\N$ and $c_1,\dots,c_k\in\Z\setminus\{0\}$ be such that $c_1+\cdots+c_k=0$.
  Then for any finite coloring of $\N$ there exist pairwise distinct $a_0,\dots,a_k\in\N$, all of the same color, such that
  $$  c_1a_1^2+\cdots+c_ka_k^2=a_0.$$
\end{corollary}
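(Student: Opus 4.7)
The plan is to deduce \Cref{cor_partitionregularequationgeneral} from \Cref{thm_mainN} by exploiting the algebraic identity
\[
\sum_{i=1}^{k} c_i\,(t + b_i u)^{2} \;=\; 2Btu + Cu^{2},
\]
valid for any integers $t,u,b_1,\dots,b_k$, where $B:=\sum_i c_i b_i$ and $C:=\sum_i c_i b_i^{2}$. The quadratic term $t^{2}\sum_i c_i$ disappears precisely because $\sum_i c_i = 0$, so if the $a_i := t + b_i u$ are to be monochromatic values then $\sum c_i a_i^{2}$ is already a controlled low-degree polynomial in $t$ and $u$.

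Concretely I would fix distinct integers $b_1,\dots,b_k$ with $B\neq 0$ and apply \Cref{thm_mainN} with $s=2$ and
\[
F_1 = \{x\mapsto b_i x : 1\le i\le k\}\cup\{x\mapsto 0\},\qquad F_2 = \{(y,z)\mapsto 2B\,yz + C\,z^{2}\}.
\]
Each function is polynomial in its last variable with zero constant term, so the hypotheses are satisfied. The theorem produces a color $\mathcal C$ and infinitely many triples $(x_0,x_1,x_2)\in\N^{3}$ such that
\[
\{x_0 + b_i x_1 : 1\le i\le k\}\cup\{x_0 + 2Bx_1 x_2 + Cx_2^{2}\}\subset\mathcal C.
\]
Taking $a_i := x_0 + b_i x_1$ and $a_0 := x_0 + 2Bx_1 x_2 + Cx_2^{2}$ makes the configuration monochromatic, and by the identity above the equation $\sum c_i a_i^{2}=a_0$ reduces to the single algebraic constraint
\[
2B x_0 x_1 + C x_1^{2} \;=\; x_0 + 2B x_1 x_2 + C x_2^{2}.
\]

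The hard part will be precisely this last constraint, which is \emph{not} a polynomial identity in $(x_0,x_1,x_2)$; it cuts out a hypersurface $V\subset\N^{3}$, and I must produce a triple from \Cref{thm_mainN}'s infinite family that lies on $V$. Solving for $x_0$ in terms of $(x_1,x_2)$ shows $V$ contains infinitely many lattice points (for instance, fixing $x_1=1$ gives the one-parameter family $x_0 = 2Bx_2 + C(x_2^{2}-1)$ whenever $2B-1$ divides the right-hand side). To guarantee intersection with \Cref{thm_mainN}'s family I expect to either (i)~exploit the fact that the theorem's proof via topological dynamics supplies far more than bare infinitude — one can additionally prescribe residue/divisibility conditions on the $x_j$ — or (ii)~refine the coloring $\chi$ to one that tracks $\chi$ along the parametrized subfamily defining $V$ and reapply \Cref{thm_mainN}. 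Either route reduces to matching the coefficients $2B$ and $C$ against monomials in the theorem's pattern, which is the technical heart of the deduction. Pairwise distinctness of $a_0,\dots,a_k$ is automatic from the distinctness of the $b_i$ together with a generic choice of $(x_0,x_1,x_2)$.
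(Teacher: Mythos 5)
Your algebraic identity $\sum_i c_i(t+b_iu)^2 = 2Btu + Cu^2$ is the right starting point, but the way you deploy it leaves a genuine gap that your proposed fixes do not close.

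The problem is that after applying \cref{thm_mainN} you obtain a monochromatic set containing $a_i=x_0+b_ix_1$ and $a_0=x_0+2Bx_1x_2+Cx_2^2$, but the identity $\sum_i c_i a_i^2 = a_0$ only holds on the hypersurface $2Bx_0x_1+Cx_1^2=x_0+2Bx_1x_2+Cx_2^2$, and \cref{thm_mainN} gives you no way to land there. Option (i) — ``prescribe residue/divisibility conditions on the $x_j$'' — cannot help: the constraint is a nonlinear polynomial relation among the $x_j$, not a congruence, and nothing in the theorem or its dynamical proof forces the output triples to satisfy an arbitrary polynomial equation. Option (ii) — refining the coloring ``along the parametrized subfamily defining $V$'' — is not actually an argument; to reapply \cref{thm_mainN} on the subfamily you would need the subfamily itself to be of the form treated by the theorem, which it is not. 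The step you have labelled ``the technical heart'' is in fact an obstruction, not a technicality.

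The missing idea is to choose the shift coefficients so that $C=\sum_i c_ib_i^2=0$, while keeping $B=\sum_i c_ib_i\neq 0$ and the $b_i$ pairwise distinct. Then $\sum_i c_i(t+b_iu)^2 = 2Btu$ exactly, with no $u^2$ term, and one can take $s=1$ (not $s=2$): the monochromatic pattern $\{x,\,xy,\,x+y,\,x+b_1y,\dots,x+b_ky\}$ from \cref{cor_onlyy} satisfies $\sum_i c_i(x+b_iy)^2 = 2Bxy$, so $a_0$ should be $xy/(2B)$ and $a_i$ should be $(x+b_iy)/(2B)$. Producing such $b_i\in\Z$ requires its own argument (the paper does it by exhibiting two explicit quadratics $\sum_\ell c_\ell(1+\ell t)^2$ and a perturbed variant, noting both vanish at $t=0$ but cannot both have vanishing derivative there, hence one has a second rational root). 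Only \emph{after} killing the quadratic term does the residue-tracking idea you allude to become the right tool: one refines the coloring to record residues mod $b:=2B$, forces $b\mid x$ and $b\mid y$, and divides through by $b$ to land in $\N$. Your proposal has the coloring-refinement step in roughly the right place but is missing the algebraic normalization $\sum c_ib_i^2=0$ that makes the rest of the deduction possible.
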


In particular, setting $k=2$ and $c_1=1$, $c_2=-1$, we deduce:
\begin{corollary}\label{cor_partitionregularequation}
   For any finite coloring of $\N$ there exists a solution $a,b,c$ of the equation $a^2-b^2=c$ with all $a,b$ and $c$ of the same color.
\end{corollary}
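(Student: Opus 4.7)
The plan is to deduce Corollary \ref{cor_partitionregularequation} immediately from Corollary \ref{cor_partitionregularequationgeneral} by specializing to $k=2$, $c_1=1$, $c_2=-1$. Since $c_1+c_2=0$, the hypothesis of the general corollary holds, so for any finite coloring of $\N$ we obtain pairwise distinct $a_0,a_1,a_2\in\N$ of the same color satisfying $a_1^2-a_2^2=a_0$. Renaming $a:=a_1$, $b:=a_2$, $c:=a_0$ yields the statement; the distinctness automatically gives $c\in\N$.

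The substance is therefore entirely in the proof of Corollary \ref{cor_partitionregularequationgeneral}. My strategy there would be to invoke Theorem \ref{thm_mainN} with small $s$ and a carefully chosen family $F_1$ (and possibly $F_2$) so that the monochromatic configuration contains explicit polynomial expressions $a_1,\dots,a_k$ together with a target $a_0$ satisfying the identity $\sum c_i a_i^2=a_0$. A first ansatz is $a_i=x_0+p_i(x_1)$ with distinct $p_i\in F_1$ of zero constant term; the assumption $\sum c_i=0$ then kills the $x_0^2$-term in $\sum c_i a_i^2$, leaving
$$\sum_{i=1}^k c_i a_i^2 \;=\; 2x_0\sum_{i=1}^k c_i p_i(x_1)\;+\;\sum_{i=1}^k c_i p_i(x_1)^2.$$

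The main obstacle I foresee is a persistent factor of $2$: the candidate targets for $a_0$ in the monochromatic set (namely $x_0 x_1$ or $x_0+p_0(x_1)$) have coefficient of $x_0$ equal to $x_1$ or to $1$, whereas the coefficient of $x_0$ on the left-hand side always lies in $2\Z[x_1]$. Overcoming this likely requires either working with $s\ge 2$, so that the monochromatic set contains richer top terms such as $x_0 x_1 x_2$ and $x_0 x_1+h(x_2)$ that can carry the extra factor, or applying Theorem \ref{thm_mainN} to a suitable multiplicative rescaling of the coloring. Once a workable polynomial identity is arranged, the pairwise distinctness of $a_0,\dots,a_k$ (and thus of $a,b,c$) is secured by the ``infinitely many tuples'' clause of Theorem \ref{thm_mainN}, which lets us select generic $x_0,x_1,\ldots$ avoiding coincidences.
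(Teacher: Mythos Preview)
Your deduction of Corollary~\ref{cor_partitionregularequation} from Corollary~\ref{cor_partitionregularequationgeneral} by specializing $k=2$, $c_1=1$, $c_2=-1$ is exactly the paper's argument. The further sketch you offer toward the general corollary is not needed for this statement (that result is proved separately), but your instinct that the factor-of-$2$ obstacle is handled by a multiplicative rescaling of the coloring is indeed how the paper resolves it.
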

Note that the similar equation $a^2-b=c$ is \emph{not} partition regular (cf. \cite[Theorem 3]{Csikvari_Gyarmati_Sarkozy12}).
\cref{cor_partitionregularequationgeneral} is proved in \cref{sec_corollaries}.

Our proof of \cref{thm_mainN} proceeds by first transferring the problem to the language of topological dynamics using a \emph{correspondence principle} (\cref{thm_correspondence}), then solving the dynamical problem using ideas developed in \cite{Bergelson_Moreira15} together with a ``complexity reduction'' method inspired by \cite{Bergelson_Leibman96}.
The correspondence principle is of independent interest because it allows one to formulte in dynamical terms the question of whether general polynomial \patterns{} are \monochromatic{}; we postpone the precise statement to \cref{sec_correspondence} because it uses notation and terminology from \cref{subsec_affinesemigroup}. 

The proof of \cref{thm_mainN} can be made elementary; to illustrate this, we present in \cref{sec_combinatorics} a short and purely combinatorial proof of \cref{cor_{x,xy,x+y}} which is independent from the rest of the paper. 
This combinatorial version of the proof is shorter but less transparent, avoiding the correspondence principle but consequentially obscuring the theorem's dynamical underpinnings.

The paper is organized as follows:
In \cref{sec_notation} we introduce some notation and establish some conventions to be used in the paper.
In \cref{sec_correspondence} we state and prove the correspondence principle, thereby reducing \cref{thm_mainN} to a statement in topological dynamics, \cref{thm_maindynamical}, which is proved in \cref{sec_proofmain}.
In \cref{sec_combinatorics} we present a more direct and combinatorial rendering of our dynamical proof of \cref{cor_{x,xy,x+y}}.
In \cref{sec_corollaries} we explore some combinatorial corollaries of our main result.
Finally, \cref{sec_finalremarks} is devoted to an extension of our results to a general class of rings.

\

\paragraph{\textbf{Acknowledgements}}
The author thanks Marc Carnovale, Daniel Glasscock, Andreas Koutsogiannis and Pedro Vieira for helpful comments on an earlier version of the paper. Thanks are also due to Donald Robertson and Florian Richter for insightful discussions which planted the seed for some of the main ideas in this paper. Special thanks go to Vitaly Bergelson for all of the above and for his constant support and encouragement.

\section{Definitions, notation and conventions}\label{sec_notation}
\subsection{The affine semigroup}\label{subsec_affinesemigroup}
We denote by $\AN^-$ the semigroup consisting of all the maps $x\mapsto ax+b$ from $\Z$ to itself, where $a\in\N$ and $b\in\Z$, and with composition of functions as the semigroup operation.
For a given $u\in\Z$, the map $x\mapsto x+u$ is denoted by $A_u$ and, if $u>0$, the map $x\mapsto ux$ is denoted by $M_u$.
The distributivity law can be written as
\begin{equation}\label{eq_distributivity}
\forall u\in\N,\ v\in\Z\qquad M_uA_v=A_{uv}M_u.
\end{equation}
Given an action $(T_g)_{g\in\AN^-}$ of $\AN^-$ on a set $X$ (meaning that for each $g\in\AN^-$, there is a map $T_g:X\to X$ and for any $g,h\in\AN^-$ we have the composition law $T_g\circ T_h=T_{gh}$) and $u\in\Z$, we will frequently denote, abusing notation slightly, the map $T_{A_u}$ simply by $A_u$ and, if $u>0$, the map $T_{M_u}$ by $M_u$.

Given a semigroup $G$, a $G$-\emph{topological system} is a pair $(X,(T_g)_{g\in G})$ where $X$ is a compact Hausdorff space (not necessarily metrizable) and $(T_g)_{g\in G}$ is an action by continuous functions $T_g:X\to X$.
A system $(X,(T_g)_{g\in G})$ is \emph{minimal} if $X$ contains no proper non-empty closed invariant subsets.
A point $x\in X$ is a \emph{minimal point} if its orbit closure $Y:=\overline{\{T_gx:g\in G\}}$ is a minimal subsystem of $X$ (i.e., if $(Y,(T_g|_Y)_{g\in G})$ is a minimal system).

Observe that any $\AN^-$-topological system $(X,(T_g)_{g\in\AN^-})$ naturally induces a $(\Z,+)$-topological system $(X,(S_u)_{u\in\Z})$, by letting $S_u:=T_{A_u}$.
A point $x\in X$ is called \emph{additively minimal} if it is a minimal point for the system $(X,(S_u)_{u\in\Z})$.

\subsection{Piecewise syndetic sets}\label{subsec_pws}
Given sets $E,H\subset\N$ and a number $n\in\N$ we use the following notation:
\begin{itemize}
  \item $nE:=\{nm:m\in E\}$,
  \item $n+E:=\{n+m:m\in E\}$,
  \item $E-n:=\{m-n:m\in E, m>n\}=\{x\in\N:x+n\in E\}$,
  \item $E+H:=\{m+n:m\in E, n\in H\}$,
  \item $E-H:=\{m-n:m\in E,n\in H, m>n\}=\bigcup_{n\in H}E-n$.
\end{itemize}

A subset $S\subset\N$ is called \emph{syndetic} if it has bounded gaps.
More precisely, $S$ is syndetic if there exists a finite set $F\subset\N$ such that $\N=S-F$.
A set $T\subset\N$ is called \emph{thick} if it contains arbitrarily long intervals or, equivalently, if it has non-empty intersection with every syndetic set.
A set $E\subset\N$ is called \emph{piecewise syndetic} if it is the intersection of a syndetic set and a thick set.

If $E$ is a piecewise syndetic set and $E\subset H$, then $H$ is also a piecewise syndetic set.
Observe that for any piecewise syndetic set $E\subset\N$ and any $n\in\N$, the sets $nE$, $n+E$ and $E-n$ are all piecewise syndetic.
Furthermore, if any of $nE$, $n+E$ or $E-n$ are piecewise syndetic, then so is $E$.
Therefore we have:
\begin{proposition}\label{prop_pwsaffine}
  Let $E\subset\N$ and $g\in\AN^-$.
   The set $E$ is piecewise syndetic if and only if its image $g(E)$ also is.
\end{proposition}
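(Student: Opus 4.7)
The plan is to derive the proposition as an immediate corollary of the elementary invariance facts recorded in the paragraph just preceding it. Every $g \in \AN^-$ has the form $g(x) = ax + b$ with $a \in \N$ and $b \in \Z$, and hence factors as $g = A_b \circ M_a$. Applying this to the set $E$, we see that $g(E) = b + aE$ inside $\Z$: if $b \geq 0$, then $g(E) \subset \N$ is exactly the translate $b + (aE)$, while if $b < 0$, then $g(E) \cap \N$ equals $(aE) - |b|$ in the paper's notation. In either case, $g(E)$ is obtained from $aE$ by one of the elementary operations $n + \cdot$ or $\cdot - n$, and $aE$ is itself obtained from $E$ by multiplication by the positive integer $a$.

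From here the proposition follows by chaining equivalences: by the two bullet-style observations noted just above the proposition statement, piecewise syndeticity is preserved in both directions under each of the three elementary operations $E \mapsto nE$, $E \mapsto n + E$, and $E \mapsto E - n$. Linking these along $E \leftrightarrow aE \leftrightarrow g(E)$ delivers the desired biconditional. The only bookkeeping point is that $g(E)$ and $g(E) \cap \N$ differ by at most $\lceil |b|/a \rceil$ elements, and piecewise syndeticity is insensitive to such finite modifications, so no ambiguity arises from the convention one adopts for interpreting $g(E) \subset \N$.

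There is no substantive obstacle in this argument: the proposition is essentially a notational consolidation of the three translation/dilation invariance properties already asserted in the surrounding text. The only mildly nontrivial input, were one to unfold it from scratch, would be the implication ``$aE$ piecewise syndetic implies $E$ piecewise syndetic''. I would handle it by writing $aE - G$ thick for some finite $G \subset \N$, enlarging $G$ (which preserves thickness) so that it meets every residue class modulo $a$, and then using the containment $aE \subset a\N$ to read off a finite set $F \subset \N$ with $E - F$ thick, witnessing piecewise syndeticity of $E$. But none of this needs to appear in the proof of the proposition itself, which consists entirely of the factorisation $g = A_b \circ M_a$ followed by two invocations of the preceding bullets.
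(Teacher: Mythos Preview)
Your proposal is correct and matches the paper's own approach exactly: the paper does not give a formal proof at all, but simply records the three elementary invariance facts (for $nE$, $n+E$, and $E-n$, in both directions) in the preceding paragraph and then writes ``Therefore we have'' before stating the proposition. Your factorisation $g = A_b \circ M_a$ together with the two case-by-case invocations of those bullets is precisely the intended argument, and your additional remarks on the $b<0$ case and the finite discrepancy between $g(E)$ and $g(E)\cap\N$ are welcome bookkeeping that the paper leaves implicit.
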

We will also make use of the following well known property of piecewise syndetic sets.
\begin{proposition}[see, for instance, {\cite[Theorem 1.24]{Furstenberg81}}]\label{prop_pwspartition}
  Let $E\subset\N$ be a piecewise syndetic set. Then for any finite partition of $E=E_1\cup\cdots\cup E_r$, one of the pieces $E_t$ is piecewise syndetic.
\end{proposition}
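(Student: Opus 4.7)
The plan is to prove the claim by translating to the dynamics of the shift on a Bernoulli space, where piecewise syndeticity admits a clean characterization.

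First, I would encode the partition by the coloring $\chi \in \Omega := \{0, 1, \ldots, r\}^\Z$ defined by $\chi(n) = t$ if $n \in E_t$, and $\chi(n) = 0$ otherwise (extending $\chi(n) = 0$ for $n \leq 0$). Let $X_\chi$ be the closure of the orbit of $\chi$ under the shift. Using the equivalent definition of piecewise syndeticity from \cref{subsec_pws} (namely, the existence of a finite $F$ with $E - F$ thick), a compactness argument on arbitrarily long intervals contained in $E-F$ yields a point $\psi_1 \in X_\chi$ whose support $\{n \in \Z : \psi_1(n) \neq 0\}$ is syndetic in $\Z$, with syndeticity constant governed by $|F|$.

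Second, I would observe that the orbit closure of $\psi_1$ cannot contain the constant zero sequence $\bar 0$: if the support of $\psi_1$ is $N$-syndetic, then each shift of $\psi_1$ has a nonzero coordinate within the window $\{0, 1, \ldots, N\}$, and this property is preserved under pointwise limits. By Zorn's lemma, this orbit closure contains a minimal subsystem $Y$, and necessarily $Y \neq \{\bar 0\}$. By shift-invariance of $Y$, some point of $Y$ has nonzero value at coordinate $0$, so for some $t \in \{1, \ldots, r\}$ the clopen set $B_t := \{\psi \in Y : \psi(0) = t\}$ is nonempty.

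Third, I would invoke the standard fact that in a minimal topological system, return times to any nonempty open set form a syndetic subset of $\Z$: applied to $B_t$, this yields that $\{n \in \Z : \psi(n) = t\}$ is syndetic for every $\psi \in Y$. Since $Y \subset X_\chi$, every finite pattern of any $\psi \in Y$ appears in some shift of $\chi$, so a syndetic-$t$ pattern on a window of length $2L$ translates into an interval of length $2L$ in $\N$ on which $E_t$ has gaps bounded by some constant $M$ independent of $L$. Letting $L\to\infty$ shows $E_t$ is piecewise syndetic.

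The main obstacle I foresee is the first step, namely the careful matching between the finite-set parameter $F$ in the definition of piecewise syndeticity and the syndeticity constant appearing in the shift space, together with the requirement that $\psi_1$ be chosen with nonzero value at $0$ (achieved by an additional shift). Once this dynamical correspondence is set up, the remainder of the argument is routine and relies only on classical minimality and recurrence facts from topological dynamics.
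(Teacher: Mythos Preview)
The paper does not supply its own proof of this proposition; it is quoted as a known result with a reference to Furstenberg's book and then used freely. Your argument via the shift on $\{0,1,\dots,r\}^{\Z}$ is precisely the classical dynamical proof that appears in that reference, and the steps you outline are correct: passing by compactness from the thickness of $E-F$ to a point $\psi_1\in X_\chi$ with syndetic support, dropping to a minimal subsystem $Y\not\ni\bar 0$, using minimality to get syndetic visits to some cylinder $B_t$, and reading this back in $\chi$ to conclude that $E_t$ is piecewise syndetic.

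Two small remarks on your stated ``obstacle''. First, you do not actually need $\psi_1(0)\neq 0$: once the support of $\psi_1$ is syndetic with gap $\leq M$, the same bound persists throughout its orbit closure (it is a closed, shift-invariant condition), so $\bar 0\notin Y$, and then shift-invariance of $Y$ already gives a point with a nonzero $0$-coordinate. Second, the uniformity you worry about in Step~5 comes for free from compactness: minimality gives $Y=\bigcup_{n\in F'}T^{-n}B_t$ for some finite $F'$, so the syndeticity constant of $\{n:\psi(n)=t\}$ is bounded by $\operatorname{diam}(F')$ uniformly in $\psi\in Y$, and this single constant is what you transport back into arbitrarily long windows of $\chi$ (which necessarily lie in $\N$ since $\chi$ vanishes on the nonpositive integers). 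With these points noted, your proposal is complete and matches the cited proof.
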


\section{An affine topological Correspondence Principle}\label{sec_correspondence}

In this section we reduce \cref{thm_mainN} to the following statement in topological dynamics:
\begin{theorem}\label{thm_maindynamical}
Let $(X,(T_g)_{g\in\AN^-})$ be an $\AN^-$-topological system with a dense set of additively minimal points, and assume that each map $T_g:X\to X$ is open and injective.
Let $s\in\N$ and, for each $i=1,\dots,s$, let $F_i$ be a finite set of functions $\N^i\to\Z$ such that for all $f\in F_i$ and any $x_1,\dots,x_{i-1}\in\N$, the function $x\mapsto f(x_1,\dots,x_{i-1},x)$ is polynomial with $0$ constant term.
Then for any open cover ${\mathcal U}$ of $X$ there exists an open set $U\in{\mathcal U}$ in that cover and infinitely many $s$-tuples $x_1,\dots,x_s\in\N$ such that
$$U\cap \bigcap_{0\leq j<i\leq s}\bigcap_{f\in F_{i-j}}M_{x_{j+1}\cdots x_s} A_{f(x_{j+1},\dots,x_i)}U\neq\emptyset$$
\end{theorem}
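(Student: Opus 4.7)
\medskip
\noindent\emph{Proof plan.} I would prove the theorem by induction on $s$. The base case $s=0$ is trivial, as the intersection is vacuous and any nonempty $U \in \mathcal{U}$ suffices.

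For the inductive step from $s-1$ to $s$, the key structural observation is that, after relabelling $y_k := x_{k+1}$, the subfamily of the intersection indexed by pairs $(i,j)$ with $j \geq 1$ coincides exactly with the full intersection of the theorem for $s-1$ variables $(y_1,\dots,y_{s-1})$ and the same polynomial families $F_i$. An alternative decomposition is obtained by fixing $x_s$ and using distributivity \eqref{eq_distributivity} to rewrite, for each $i<s$,
\[ M_{x_{j+1}\cdots x_s} A_{f(x_{j+1},\dots,x_i)} U \;=\; M_{x_{j+1}\cdots x_{s-1}} A_{x_s f(x_{j+1},\dots,x_i)} V, \qquad V := M_{x_s} U, \]
exhibiting the $i<s$ portion as an $(s-1)$-variable intersection on $V$ with modified families $\{x_s\cdot f : f \in F_{i-j}\}$, which still satisfy the hypotheses of the theorem. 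The remaining $i=s$ terms are, for fixed $(x_1,\dots,x_{s-1})$, additive shifts $A_{f(x_1,\dots,x_s)}$ by quantities depending polynomially on $x_s$ with zero constant term.

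My strategy is to combine these two decompositions. First, apply the inductive hypothesis to the cover $\mathcal{U}$ to obtain $U \in \mathcal{U}$ and parameters $(x_2,\dots,x_s) \in \N^{s-1}$ making the $j \geq 1$ sub-intersection meet $U$, with a witness point $y \in U$. Then reduce the remaining $j=0$ constraints to a topological polynomial van der Waerden problem for the free variable, using distributivity together with the injectivity and openness of the $T_g$ to recast each constraint as an additive recurrence condition on $U$ and its multiplicative images. Applying the Bergelson--Leibman topological polynomial van der Waerden theorem in the orbit closure of an additively minimal point in $U$ then yields a valid choice of the free variable, and iterating within shrinking sub-covers produces infinitely many valid $s$-tuples.

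The principal obstacle is that the polynomial hypothesis concerns only the \emph{last} argument of each $f$: if one varies $x_1$ at the final step, the functions $f(x_1,\dots,x_i)$ with $i \geq 2$ are not polynomial in $x_1$ and cannot be fed directly into polynomial van der Waerden. I expect the resolution to involve peeling variables off in the correct order --- last-to-first --- so that at each step the varying variable is the one in which the functions are genuinely polynomial. This in turn forces a strengthening of the inductive conclusion: instead of producing a single valid tuple, the inductive hypothesis must be upgraded to deliver a \emph{piecewise-syndetic} family of admissible parameter tuples. The preservation of piecewise syndeticity under the affine semigroup action (\cref{prop_pwsaffine}) and under finite partitions (\cref{prop_pwspartition}), together with the openness and injectivity of each $T_g$, will be the main technical tools for propagating this largeness across the induction.
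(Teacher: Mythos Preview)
Your plan has a real structural gap, and the paper's proof follows a genuinely different architecture.

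The paper does \emph{not} argue by induction on $s$. Instead it runs a single infinite construction: starting from an arbitrary $B_0=U_{t_0}$, it uses \cref{thm_polvdW} at each step to choose $y_n\in\N$ so that a certain intersection $D_n\subset B_{n-1}$ is nonempty, and then sets $B_n:=M_{y_n}D_n\cap U_{t_n}$ for some $t_n$ chosen \emph{after} applying $M_{y_n}$. Because $(t_n)$ takes only finitely many values, pigeonhole gives $n_0<\cdots<n_s$ with the same colour $t$, and one then puts $x_i:=y_{n_{i-1}+1}\cdots y_{n_i}$. The verification that $B_{n_s}$ sits inside every set in the required intersection uses the distributivity identity \eqref{eq_distributivity} and the nesting $B_n\subset M_{y(m,n)}B_m$.

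Your induction-on-$s$ scheme runs into the following obstruction, which your proposal does not resolve. Suppose the inductive hypothesis hands you $U\in\mathcal U$ and $(x_1,\dots,x_{s-1})$ with a witness in
\[
U\cap\bigcap_{0\le j<i\le s-1}M_{x_{j+1}\cdots x_{s-1}}A_{f(x_{j+1},\dots,x_i)}U.
\]
To upgrade to level $s$ by adjoining $x_s$ (the only variable in which the $f$'s are guaranteed polynomial), every old term $M_{x_{j+1}\cdots x_{s-1}}A_{f}U$ must be replaced by $M_{x_s}\bigl(M_{x_{j+1}\cdots x_{s-1}}A_{f}U\bigr)$, so the entire $(s-1)$-intersection gets pushed by $M_{x_s}$. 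You must then intersect this with the \emph{same} $U$ on the outside. Nothing in your hypothesis says that $U$ meets the $M_{x_s}$-image of the old witness set; in general it will not, and polynomial van der Waerden cannot manufacture this, because the obstruction is multiplicative, not additive. The paper handles exactly this point by \emph{re-choosing the cover element after each multiplicative push}: $B_n$ lands in $U_{t_n}$ with $t_n$ selected after $M_{y_n}$ is applied, and only at the end does pigeonhole force $s+1$ of these choices to coincide. An induction that commits to a single $U$ up front cannot reproduce this mechanism.

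Your proposed fix --- strengthening the inductive output to a ``piecewise-syndetic family of admissible tuples'' --- is both vague and in the wrong category: \cref{thm_maindynamical} is a statement about open sets in a compact system, and the piecewise-syndetic machinery lives on the combinatorial side of the correspondence. Even in the combinatorial proof of \cref{cor_{x,xy,x+y}} in \cref{sec_combinatorics}, the argument is not an induction on $s$ but the same colour-focusing sequence construction. If you want to salvage an inductive approach, the honest strengthening you would need is essentially ``there exists a nested sequence $B_0\supset M_{y_1}^{-1}B_1\supset\cdots$ with $B_n\subset U_{t_n}$ satisfying the analogue of \eqref{eq_proof_main41}'' --- at which point you have simply rediscovered the paper's construction.
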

The proof of \cref{thm_maindynamical} is presented in \cref{sec_proofmain}.

\subsection{Reducing \cref{thm_mainN} to \cref{thm_maindynamical}}
The elegant idea of using topological dynamics to find \monochromatic{} \patterns{} on $\N$ was developed by Furstenberg and Weiss in \cite{Furstenberg_Weiss78}.
They considered each coloring $\chi:\N\to\{1,\dots,r\}$ as a point in the symbolic system $(\{1,\dots,r\}^\N,T)$ (where $T$ is the left shift), and observed that it is possible to reformulate van der Waerden's theorem (among many others) as a multiple recurrence result on minimal subsystems of $(\{1,\dots,r\}^\N,T)$.
By proving the resulting multiple recurrence theorem (\cite[Theorem 1.5]{Furstenberg_Weiss78}), they obtained a new proof of van der Waerden's theorem (and indeed of it's multidimensional version, due originally to Tibor Gr\"unwald).
This correspondence is now a standard technique; for instance it was used by Bergelson and Leibman in their proof of the polynomial van der Waerden's theorem \cite[Corollary 1.11]{Bergelson_Leibman96} (see \cref{thm_polvdW}).

Unfortunately, the same procedure does not allow one to deduce \cref{thm_mainN} from \cref{thm_maindynamical}.
This is essentially because the configurations in \cref{thm_mainN} are not invariant under shifts (additive or multiplicative): if $P$ is a set of the form $\{xy,x+y\}$ and $c\in\N$, then in general neither $P+c$ nor $Pc$ is of the same form.
By contrast, observe that arithmetic progressions are invariant under \emph{both} addition and multiplication, in the sense that for any arithmetic progression $P$ and any $c\in\N$, both $P+c$ and $Pc$ are arithmetic progressions of the same length.

Nevertheless we obtained the following correspondence principle.

\begin{theorem}\label{thm_correspondence}
  There exists an $\AN^-$-topological system $(X,(T_g)_{g\in\AN^-})$ with a dense set of additively minimal points, such that each map $T_g:X\to X$ is open and injective, and with the property that for any finite coloring $\N=C_1\cup\cdots\cup C_r$ there exists an open cover $X=U_1\cup\cdots\cup U_r$ such that for any $g_1,\dots,g_k\in\AN^-$ and $t\in\{1,\dots,r\}$,
  \begin{equation}\label{eq_thm_correspondence}
    \bigcap_{\ell=1}^kT_{g_\ell}(U_t)\neq\emptyset\qquad\Longrightarrow\qquad\N\cap\bigcap_{\ell=1}^kg_\ell(C_t)\neq\emptyset
  \end{equation}
\end{theorem}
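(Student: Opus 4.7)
The plan is to construct $X$ as a closed $\AN^-$-invariant subset of $\beta\Z$, where the action of $\AN^-$ is given by Stone--\v{C}ech extensions of affine maps, and then to translate recurrence in $X$ into combinatorial statements about $\N$ via Stone duality. Since each affine map $g:\Z\to\Z$ is injective with image $a\Z+b$, its Stone--\v{C}ech extension $T_g=\beta g$ is automatically a homeomorphism of $\beta\Z$ onto the clopen subset obtained as the closure of $g(\Z)$ in $\beta\Z$; in particular $T_g$ is open and injective on $\beta\Z$. Fixing an additively minimal ultrafilter $p$ in the minimal ideal $K(\beta\Z,+)$ of the additive structure (whose existence is standard via Zorn's lemma), a natural candidate is $X:=\overline{\{T_g p : g \in \AN^-\}}$.

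To obtain density of additively minimal points in $X$, the strategy is to prove that every orbit element $T_g p$ is itself additively minimal; since $\AN^-\cdot p$ is dense in $X$ by construction, density then follows. For pure translation $g=A_b$ this is immediate. For $g=M_a$, one uses the decomposition
\[
\overline{\{u+ap:u\in\Z\}}\;=\;\bigsqcup_{r=0}^{a-1}\bigl(r+a\,\overline{\{v+p:v\in\Z\}}\bigr),
\]
and argues that the $(\Z,+)$-orbit of any $q$ in this union cycles through the $a$ disjoint clopen slices $\overline{a\Z+r}$, while within each slice it is dense by the minimality of $\overline{\Z+p}$ together with continuity of multiplication by $a$. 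A general $g=A_b M_a$ is then handled by combining the two cases.

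For the correspondence itself, given a coloring $\N=C_1\cup\cdots\cup C_r$, set $U_t:=\widehat{C_t}\cap X$, where $\widehat{C_t}\subset\beta\Z$ is the clopen set of ultrafilters containing $C_t$; these cover $X$. If $q\in\bigcap_{\ell=1}^k T_{g_\ell}(U_t)$, then for each $\ell$ there exists $q_\ell\in U_t$ with $q=T_{g_\ell}q_\ell$; since $C_t\in q_\ell$ and $g_\ell$ is injective, one has $g_\ell^{-1}(g_\ell(C_t))=C_t\in q_\ell$, hence $g_\ell(C_t)\in q$. Thus $\bigcap_{\ell=1}^k g_\ell(C_t)\in q$, and because $q$ is a non-principal ultrafilter this intersection is infinite. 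Since each $g_\ell(C_t)\subseteq g_\ell(\N)\subseteq[g_\ell(1),\infty)$ is bounded below in $\Z$, an infinite subset of $\Z$ bounded below must contain arbitrarily large positive integers, yielding $\N\cap\bigcap_{\ell=1}^k g_\ell(C_t)\neq\emptyset$.

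The hardest step will be verifying that $T_g$ is open as a map $X\to X$, rather than merely as a map on $\beta\Z$: the image $T_g(X)$ must be clopen in $X$, which reduces to showing that every point of $X$ lying in the image region (the closure of $g(\Z)$ in $\beta\Z$) is itself a $T_g$-image of a point of $X$, i.e.\ that $T_g^{-1}\bigl(X\cap\overline{g(\Z)}^{\beta\Z}\bigr)\subseteq X$. This compatibility is delicate because the issue is whether nets $(T_{h_\alpha}p)$ whose limit lies in $\overline{g(\Z)}$ can always be reindexed so that $h_\alpha\in g\AN^-$, which depends on the arithmetic-progression data of $p$. The likely fix is either to refine the choice of $p$ to be sufficiently generic so that no ``stray'' approaches from $h_\alpha\notin g\AN^-$ occur, or to enlarge $X$ to be closed under the relevant inverse maps while preserving density of additively minimal points.
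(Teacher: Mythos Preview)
Your proposal has a genuine gap, and you identify it yourself in the final paragraph: the openness of $T_g:X\to X$ when $X$ is the $\AN^-$-orbit closure of a single additively minimal ultrafilter $p$ is not established. You reduce it to showing $T_g^{-1}\bigl(X\cap\overline{g(\Z)}\bigr)\subseteq X$, observe correctly that this is delicate, and then offer two speculative fixes (``refine the choice of $p$'' or ``enlarge $X$'') without carrying either out. As stated, the proof is incomplete, and the first fix is unlikely to work cleanly: there is no obvious genericity condition on a single $p$ that forces every limit of $(T_{h_\alpha}p)$ lying in $\overline{g(\Z)}$ to be approachable only through $h_\alpha\in g\AN^-$.

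The paper's proof takes exactly your second suggested fix, but in a specific and decisive way: it sets $X:=\overline{K(\beta\N,+)}$, the closure of \emph{all} additively minimal points, rather than an orbit closure of one. The key advantage is that this $X$ has an intrinsic combinatorial characterization (\cite[Corollary 4.41]{Hindman_Strauss98}): an ultrafilter $p$ lies in $X$ if and only if every $E\in p$ is piecewise syndetic. With this in hand, both $\AN^-$-invariance of $X$ and openness of $T_g|_X$ follow immediately from the single fact (\cref{prop_pwsaffine}) that a set $E\subset\N$ is piecewise syndetic if and only if $g(E)$ is, for any $g\in\AN^-$. Concretely, $T_g(p)\in X$ iff every $g^{-1}(E)\in p$ has $E$ piecewise syndetic, and this is equivalent to $p\in X$ since piecewise syndeticity passes both ways along $g$. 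Thus the openness problem, which blocks your approach, dissolves entirely once $X$ is chosen large enough to admit this characterization. Your argument for density of additively minimal points (that each $T_g p$ is itself additively minimal) is essentially correct, but becomes unnecessary with the paper's choice of $X$, where density is built in by construction.
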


\begin{remark}\label{rmkr_pws}
It follows from the proof of \cref{thm_correspondence} that the system $(X,(T_g)_{g\in\AN^-})$ also has the property that for any piecewise syndetic set $C_t\subset\N$ there exists a non-empty open set $U_t\subset X$ such that \eqref{eq_thm_correspondence} holds for any $g_1,\dots,g_k\in\AN^-$.
\end{remark}

\begin{remark}\label{rmrk_pws2}
It follows from the proof of \cref{thm_correspondence} that the intersection $\N\cap\bigcap_{j=1}^kg_j(C_t)$ (both in the theorem and in \cref{rmkr_pws}) is not only non-empty but is in fact piecewise syndetic.
\end{remark}

We can now derive \cref{thm_mainN} from its topological counterpart \cref{thm_maindynamical} and the correspondence principle \cref{thm_correspondence}.

\begin{proof}[Proof of \cref{thm_mainN}]
Let $s\in\N$ and, for each $i=1,\dots,s$, let $F_i$ be a finite set of functions $\N^i\to\Z$ such that for all $f\in F_i$ and any $x_1,\dots,x_{i-1}\in\N$, the function $x\mapsto f(x_1,\dots,x_{i-1},x)$ is polynomial with $0$ constant term.
Let $\N=C_1\cup\cdots\cup C_r$ be a finite coloring of $\N$.
We need to show that there exists a color $C_t$ and (infinitely many) $s+1$-tuples $x_0,\dots,x_s\in\N$ such that $x_0\cdots x_s\in C_t$ and, for every $0\leq j<i\leq s$ and $f\in F_{i-j}$, we have $x_1\cdots x_j+f(x_{j+1},\dots,x_i)\in C_t$.

We append to $F_s$ the zero function $f:\N^s\to\{0\}$ if necessary.
Invoking \cref{thm_correspondence} and then \cref{thm_maindynamical}, we find a color $C_t$ and (infinitely many) $s$-tuples $x_1,\dots,x_s\in\N$ such that the intersection
\begin{equation}\label{eq_proof_main1}\N\cap C_t\cap  \bigcap_{0\leq j<i\leq s}\bigcap_{f\in F_{i-j}}M_{x_{j+1}\cdots x_s} A_{-f(x_{j+1},\dots,x_i)}C_t
\end{equation}
is non-empty.
Take $x$ in the intersection \eqref{eq_proof_main1} and observe that $x\in x_1\cdots x_sC_t$ (letting $j=0$, $i=s$ and $f\equiv0$).
Therefore $x_0:=x/(x_1\cdots x_s)\in C_t$ (and in particular is an integer).

Finally, for $0\leq j<i\leq s$ and $f\in F_{i-j}$, we have $x\in x_{j+1}\cdots x_s\big(C_t-f(x_{j+1},\dots,x_i)\big)$, so $x_0\cdots x_j+f(x_{j+1},\dots,x_i)=x/(x_{j+1}\cdots x_s)+f(x_{j+1},\dots,x_i)\in C_t$.
\end{proof}

\subsection{Proof of the correspondence principle}
The remainder of this section is dedicated to the proof of \cref{thm_correspondence}.
The construction of $X$ is quite explicit as a subset of the Stone-\v Cech compactification of $\N$, realized as the space of ultrafilters on $\N$.
In this setting, the action of $\AN^-$ on $X$ is natural.
The idea of using the Stone-\v Cech compactification to prove the correspondence principle was inspired by its implicit use in \cite{Beiglbock11} (in the setting of measurable dynamics).
We start by summarizing some facts about ultrafilters which we will use, refering the reader to \cite[Section 3]{Bergelson96} for a short and friendly introduction on the subject, and to \cite{Hindman_Strauss98} for a complete treatment.
We will only make use of the facts and definitions about ultrafilters in this section. 

An \emph{ultrafilter} on $\N$ is a non-empty family $p$ of subsets of $\N$ which is closed under intersections and supersets, and which satisfies the property $E\in p\iff (\N\setminus E)\notin p$.
For each $x\in\N$, the family $p_x=\{E\subset\N:x\in E\}$ is an ultrafilter;
ultrafilters of this form are called \emph{principle}.
The existence of non-principle ultrafilters requires (at least some weak form of) the axiom of choice.

Denote by $\beta\N$ the set of all ultrafilters over $\N$.
The sets of the form $\overline E:=\{p\in\beta\N:E\in p\}$ with $E\subset\N$ form a base for a topology on $\beta\N$.
With this topology $\beta\N$ becomes a compact Hausdorff space (cf. \cite[Theorem 2.18]{Hindman_Strauss98}) and can be identified with the Stone-\v Cech compactification of $\N$ (cf. \cite[Theorem 3.27]{Hindman_Strauss98}), where $\N$ is embedded densely inside $\beta\N$ by identifying each $x\in\N$ with the corresponding principal ultrafilter $p_x$.

There is a natural action $(T_g)_{g\in\AN^-}$ of $\AN^-$ on the set $\beta\N\setminus\N$ of non-principle ultrafilters, described as follows.
For $g\in\AN^-$, the map $T_g:\beta\N\setminus\N\to\beta\N\setminus\N$ takes $p\in\beta\N\setminus\N$ to
\begin{equation}\label{eq_defineTg}
  T_g(p):=\big\{E\subset\N:g^{-1}(E)\in p\big\}=\big\{E\subset\N:\{x\in\N:g(x)\in E\}\in p\big\}
\end{equation}
\begin{remark}
  An equivalent way to define $T_g$ is to start with a map $T_g:\beta\N\to\beta\Z$, defined on principal ultrafilters via the formula $T_g(p_x)=p_{g(x)}$ and then extend it to $\beta\N$ using the universal property of the Stone-\v Cech compactification.
One can then check that for a non-principle ultrafilter $p\in\beta\N\setminus\N$, the image $T_g(p)$ is in fact in $\beta\N\setminus\N$ and corresponds to the ultrafilter described in \eqref{eq_defineTg}.
We will not make use of this fact.
\end{remark}

\begin{lemma}\label{lemma_ultrafilterswork}
  For each $g\in\AN^-$, the map $T_g:\beta\N\setminus\N\to\beta\N\setminus\N$ is continuous, open and injective.
  Moreover, for $g,h\in\AN^-$ one has $T_g\circ T_h=T_{gh}$.
\end{lemma}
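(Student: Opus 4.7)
The plan is to verify each assertion directly from the formula $T_g(p)=\{E\subset\N:g^{-1}(E)\in p\}$, using the observation that for $g(x)=ax+b$ the preimage $g^{-1}(\N)$ is cofinite in $\N$ and hence belongs to every non-principal ultrafilter on $\N$. Moreover, $g$ restricts to a bijection between $g^{-1}(\N)$ and $g(\N)\cap\N$, whose inverse $\sigma$ will serve as a partial left-inverse for $g$.

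First I would check that $T_g(p)$ is a non-principal ultrafilter on $\N$. The filter axioms are immediate from $g^{-1}(A\cap B)=g^{-1}(A)\cap g^{-1}(B)$ and monotonicity; the ultrafilter property follows from $g^{-1}(\N\setminus E)=g^{-1}(\N)\setminus g^{-1}(E)$ together with $g^{-1}(\N)\in p$; and non-principality is forced because each $g^{-1}(\{x\})$ has at most one element. The composition law $T_g\circ T_h=T_{gh}$ reduces to $(gh)^{-1}=h^{-1}g^{-1}$, and continuity falls out of the identity $T_g^{-1}(\overline E)=\overline{g^{-1}(E)}$, which is itself a basic open set.

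The formula $g^{-1}(g(E)\cap\N)=E\cap g^{-1}(\N)$, a direct consequence of injectivity of $g$ on $\N$, yields the handy equivalence $E\in p\iff g(E)\cap\N\in T_g(p)$ for every non-principal $p$. This immediately gives injectivity of $T_g$: if $E\in p\setminus q$, then $g(E)\cap\N$ separates $T_g(p)$ from $T_g(q)$.

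The main obstacle will be openness. I would establish the stronger claim that $T_g$ maps the basic open set $\overline E\cap(\beta\N\setminus\N)$ onto $\overline{g(E)\cap\N}\cap(\beta\N\setminus\N)$. The forward inclusion is again the equivalence above. For the reverse, given a non-principal $q$ with $g(E)\cap\N\in q$ (so in particular $g(\N)\cap\N\in q$), one must lift $q$ through $g$. The plan is to extend the partial inverse $\sigma:g(\N)\cap\N\to g^{-1}(\N)$ arbitrarily to a map $\tilde\sigma:\N\to\N$ and to set $p=\{F\subset\N:\tilde\sigma^{-1}(F)\in q\}$. Since $g(\N)\cap\N\in q$, the arbitrary part of the extension is invisible inside $q$, so $\tilde\sigma^{-1}(g^{-1}(F))$ agrees with $F$ on a set in $q$; a short calculation then delivers $T_g(p)=q$ and $E\in p$, while non-principality of $p$ follows from the observation that each fiber $\tilde\sigma^{-1}(\{x\})$ is either a singleton or is contained in the complement of the $q$-large set $g(\N)\cap\N$. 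This supplies the required preimage, completes the openness proof, and concludes the lemma.
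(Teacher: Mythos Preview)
Your proof is correct and follows the same strategy as the paper: well-definedness and the semigroup law from the definitions, continuity via $T_g^{-1}(\overline{E})=\overline{g^{-1}(E)}$, injectivity by separating with $g(E)\cap\N$, and openness through the identity $T_g(\overline{E}\setminus\N)=\overline{g(E)\cap\N}\setminus\N$. The one place you go further is the reverse inclusion for openness: the paper only checks that any $T_g$-preimage of a point of $\overline{g(E)\cap\N}$ already lies in $\overline{E}$, while you actually produce such a preimage by pushing $q$ forward along an extension of the partial inverse $\sigma$ --- which is the step genuinely needed to conclude that every point of $\overline{g(E)\cap\N}\setminus\N$ lies in the image.
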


\begin{proof}
  One can easily check (using only the definitions) that $T_g(p)$ is indeed a non-principle ultrafilter and that $T_g\circ T_h=T_{gh}$.
  To show that $T_g$ is continuous, take an open set $\overline E\subset\beta\N$ for $E\subset\N$ infinite; we need to show that $T_g^{-1}(\overline E)$ is open.
  We have
  $$p\in T_g^{-1}(\overline E)\iff E\in T_g(p)\iff g^{-1}(E)\in p$$
  therefore $T_g^{-1}(\overline E)=\overline{g^{-1}(E)}$ is open and $T_g$ is continuous.

  To show that $T_g$ is injective, let $p\neq q$ be in $\beta\N\setminus\N$ and let $E\in p\setminus q$.
  Since $g:\N\to\Z$ is injective we have that $g^{-1}(g(E)\cap\N)$ is a subset of $E$; since $E\notin q$, it follows that also $g^{-1}(g(E)\cap\N)\notin q$ and hence $g(E)\cap\N\notin T_g(q)$.
  On the other hand, $g(E)\cap\N$ is a co-finite subset of $g(E)$, which implies that $g^{-1}(g(E)\cap\N)$ is a co-finite subset of $E$.
  Since $p$ is non-principal, it can not contain finite sets, therefore $g^{-1}(g(E)\cap\N)\in p$ and hence $g(E)\cap\N\in T_g(p)$.
  This shows that $T_g(p)\neq T_g(q)$, proving injectivity.

  Finally we show that $T_g$ is open. Let $E\subset\N$ be infinite; we will show that $T_g(\overline E\setminus\N)=\overline{g(E)\cap\N}\setminus\N$, which will imply that $T_g:\beta\N\setminus\N\to\beta\N\setminus\N$ is indeed open.
  As in the proof of injectivity, if $p\in\overline E$ is non-principal, then $g(E)\cap\N\in T_g(p)$, proving one of the inclusions.
  Conversely, if $p\in\beta\N\setminus\N$ is such that $g(E)\cap\N\in T_g(p)$, then $g^{-1}(g(E)\cap\N)\in p$ and hence $E\in p$, proving the other inclusion and finishing the proof.
\end{proof}

 \cref{lemma_ultrafilterswork} implies that $(T_g)_{g\in\AN^-}$ is an action on $\beta\N\setminus\N$ and hence $\big(\beta\N\setminus\N,(T_g)_{g\in\AN^-}\big)$ is an $\AN^-$-topological dynamical system.
We are now ready to prove \cref{thm_correspondence}.

\begin{proof}[Proof of \cref{thm_correspondence}]
Let $Y\subset\beta\N\setminus\N$ be the set of all additively minimal points in $\big(\beta\N\setminus\N,(T_g)_{g\in\AN^-}\big)$ and let $X:=\overline{Y}$ be its closure.
It is usual to denote $Y=K(\beta\N,+)$.
We will show that for each $g\in\AN^-$, $T_g$ maps $X$ into $X$.

It follows from \cite[Corollary 4.41]{Hindman_Strauss98} that an ultrafilter $p\in\beta\N$ is in $X=\overline{K(\beta\N,+)}$ if and only if every member $E\in p$ is piecewise syndetic.
Take $p\in X$ and $g\in\AN^-$; we claim that $T_g(p)\in X$.
Using the definition, it suffices to show that if $g^{-1}(E)$ is piecewise syndetic, then so is $E$.
It follows from \cref{prop_pwsaffine} that if $g^{-1}(E)$ is piecewise syndetic, then so is $g(g^{-1}(E))$, and since $g(g^{-1}(E))\subset E$ we conclude that $E$ is also piecewise syndetic.
This shows that each $g\in\AN^-$ induces a natural continuous map $T_g:X\to X$.
Moreover, a similar argument shows that if $p\in\beta\N\setminus\N$ and $g\in\AN^-$ are such that $T_g(p)\in X$, then $p\in X$; therefore $T_p:X\to X$ is also open.

So far we constructed a compact Hausdorff space $X$ together with an action $(T_g)_{g\in\AN^-}$ of $\AN^-$ on $X$ by continuous injective open maps with a dense set of additively minimal points.
To finish the proof, consider a coloring $\N=C_1\cup\cdots\cup C_r$ and let $U_t:=\{p\in X:C_t\in p\}=\overline{C_t}\cap X$ for each $t\in\{1,\dots,r\}$.
Then each $U_t$ is a (possibly empty) open subset of $X$ and each $p\in X$ belongs to some $U_t$.
Now let $g_1,\dots,g_k\in\AN^-$ and $t\in\{1,\dots,r\}$ be such that $\bigcap_{\ell=1}^k T_{g_\ell}(U_t)\neq\emptyset$.
Then, since the maps $T_{g_\ell}:X\to X$ are open, it follows that $\bigcap_{\ell=1}^k T_{g_\ell}(U_t)$ is a non-empty open subset of $X$.
Take any $p$ in this intersection; we claim that $g_\ell(C_t)\cap\N\in p$ for any $\ell\in\{1,\dots,k\}$.

Indeed, for each $\ell\in\{1,\dots,k\}$, there exists $p_\ell\in U_t\subset\overline{C_t}$ such that $p=T_{g_\ell}(p_\ell)$.
Since $g_\ell^{-1}(g_\ell(C_t)\cap\N)$ is a co-finite subset of $C_t$ and $p_\ell$ is non-principal, it follows that $g_\ell^{-1}(g_\ell(C_t)\cap\N)\in p_\ell$ and hence indeed $g_\ell(C_t)\cap\N\in p$, as desired.
Finally, it follows that the finite intersection $\N\cap\bigcap_{\ell=1}^kg_\ell(C_t)$ is also in $p$ and hence is non-empty.
\end{proof}

\section{Proof of \cref{thm_maindynamical}}\label{sec_proofmain}
\subsection{A version of the polynomial van der Waerden theorem}
We will make use of the polynomial van der Waerden theorem of Bergelson and Leibman:
\begin{theorem}[cf. {\cite[Corollary 1.11]{Bergelson_Leibman96}}]\label{thm_polvdWoriginal}
  Let $F\subset\Z[x]$ be a finite set of polynomials such that $p(0)=0$ for all $p\in F$.
  Then for any finite coloring of $\N$ there exist $x,y\in\N$ such that the set $\{x+p(y):p\in F\}$ is monochromatic.
\end{theorem}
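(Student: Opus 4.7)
The plan is to follow the standard two-step approach pioneered by Bergelson and Leibman: first reduce to topological dynamics via the Furstenberg--Weiss correspondence, then prove a topological multiple recurrence theorem by PET (Polynomial Exhaustion Technique) induction on the polynomial family. Concretely, given a finite coloring $\chi:\N\to\{1,\ldots,r\}$, I would view $\chi$ as a point in the shift system $(\Omega,T):=(\{1,\ldots,r\}^\N,T)$, pass to a minimal subsystem $Y$ of its orbit closure, and reduce the coloring statement to the following dynamical claim: for any minimal $\Z$-system $(Y,T)$ on a compact Hausdorff space, any nonempty open $U\subseteq Y$, and any finite family $F\subset\Z[x]$ of polynomials with zero constant term, there exists $y\in\N$ with $U\cap\bigcap_{p\in F}T^{-p(y)}U\neq\emptyset$. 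Combining this with an open cover / pigeonhole argument on $Y$ would yield \cref{thm_polvdWoriginal}.

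I would then prove the dynamical claim by induction on the \emph{PET weight} $w(F)$, an ordinal-valued invariant due to Bergelson which refines the multi-set of degrees of the $p\in F$ by also tracking their distinct leading terms at each degree, ordered lexicographically. The base case is the linear case $F\subset\Z\cdot x$, which is van der Waerden's theorem and can itself be obtained via Birkhoff multiple recurrence in the product system $(Y^{|F|},T^{\otimes|F|})$. For the inductive step I would select a polynomial $q\in F$ of minimal positive degree and form the derived family
\[
F^{*}:=\big\{p(x+n)-q(n):p\in F\big\}\setminus\{0\},
\]
where $n$ is an auxiliary parameter. Granting the key combinatorial lemma $w(F^{*})<w(F)$, I would apply the induction hypothesis to $F^{*}$ in the product system under the diagonal action, and transfer the resulting recurrence back to $F$ via a van der Corput / pigeonhole-style compactness argument in the parameter $n$.

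The main obstacle is twofold. On the combinatorial side, verifying $w(F^{*})<w(F)$ under the correct definition of PET weight is delicate: families must be ordered so that the differencing operation strictly simplifies them, which requires careful tracking of leading-term data across degrees. On the dynamical side, the lifting from $F^{*}$ back to $F$ really requires a strengthened inductive hypothesis — not merely existence of a single good $y$, but sufficient largeness of the set of good $y$'s (piecewise syndeticity, or IP$^*$, is the usual choice) — so that one can intersect return-time sets across the values of the auxiliary parameter $n$. With both ingredients in place the induction closes, and the polynomial van der Waerden theorem follows.
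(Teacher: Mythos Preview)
The paper does not supply its own proof of \cref{thm_polvdWoriginal}: it is quoted as a known result of Bergelson and Leibman (with the citation to \cite[Corollary~1.11]{Bergelson_Leibman96}) and used as a black box to derive \cref{thm_polvdW}. So there is nothing in the paper to compare your argument against; your outline is essentially a sketch of the original Bergelson--Leibman proof, which the paper simply imports.

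As a sketch of that original argument your outline is broadly on target, with one caveat worth flagging. The PET induction in the \emph{topological} setting (which is what is needed for the partition statement) does not proceed via a van der Corput differencing step; van der Corput is the engine in the measure-theoretic/ergodic proof of the polynomial Szemer\'edi theorem. In the topological proof one instead runs a color-focusing argument (in the spirit of the Furstenberg--Weiss proof of van der Waerden), building a nested sequence of open sets and using the inductive hypothesis at each stage to find a common return time. Your instinct that a strengthened inductive hypothesis is required is correct, but the right strengthening in the topological context is a uniform statement over open covers (or equivalently, that the good $y$'s form an IP$^*$ set), not a van der Corput-style averaging in an auxiliary parameter. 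The derived family you wrote, $\{p(x+n)-q(n):p\in F\}$, is closer to the ergodic formulation; in the topological PET one typically subtracts a fixed member $q\in F$ of minimal degree and considers $\{p-q:p\in F\}$, which strictly lowers the PET weight and allows the color-focusing to proceed. With that adjustment your plan would carry through.
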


As mentioned in the previous section, the proof of \cref{thm_polvdWoriginal} in \cite{Bergelson_Leibman96} is derived from a topological statement.
While this topological statement (namely, \cite[Theorem C]{Bergelson_Leibman96}) is only proved for metrizable spaces, it is remarked in \cite[Proposition 1.10]{Bergelson_Leibman96} that the result holds in the non-metrizable setting, either by running a similar proof or by applying the combinatorial version of polynomial van der Waerden directly.
We use the second approach to derive the following corollary, which is a dynamical version of \cref{thm_polvdWoriginal} in the form we will use.
\begin{corollary}
\label{thm_polvdW}
Let $(X,(T_g)_{g\in\AN^-})$ be an $\AN^-$-topological dynamical system, and assume that $X$ contains a dense set of additively minimal points.
Let $F\subset\Q[x]$ be a finite set such that $p(0)=0$ for all $p\in F$.
Then for any nonempty open set $U\subset X$ there exists $n\in\N$ such that $p(n)\in\Z$ for each $p\in F$ and
$$\bigcap_{p\in F}A_{p(n)}U\neq\emptyset$$
\end{corollary}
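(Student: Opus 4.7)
The plan is to deduce \cref{thm_polvdW} from the combinatorial polynomial van der Waerden theorem, \cref{thm_polvdWoriginal}, via a standard minimality-to-coloring argument of Furstenberg--Weiss type. First I would clear denominators: if $d\in\N$ is a common denominator of the coefficients of the polynomials in $F$, then the substitution $\tilde p(x):=p(dx)$ produces polynomials in $\Z[x]$ with zero constant term, and it suffices to produce $m\in\N$ with $\bigcap_{p\in F}A_{\tilde p(m)}U\neq\emptyset$, after which $n:=dm$ has the required properties.

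Next, since the additively minimal points are dense in $X$ and $U$ is open and nonempty, I would pick such a point $x_0\in U$ and set $Y:=\overline{\{A_u x_0:u\in\Z\}}$, so that $(Y,(A_u)_{u\in\Z})$ is a minimal $\Z$-system and $V:=U\cap Y$ is nonempty and open in $Y$. By minimality, the family of open sets $\{A_u^{-1}V:u\in\Z\}$ covers $Y$; compactness then yields $u_1,\dots,u_r\in\Z$ with $Y=\bigcup_{i=1}^{r}A_{u_i}^{-1}V$. This cover determines a coloring $c\colon\N\to\{1,\dots,r\}$ by letting $c(n)$ be the smallest index $i$ for which $A_{n+u_i}x_0\in V$.

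I would then apply \cref{thm_polvdWoriginal} to the family $\{0\}\cup\{-\tilde p:p\in F\}\subset\Z[x]$ to obtain $x,y\in\N$ and an index $i\in\{1,\dots,r\}$ such that $c(x)=c(x-\tilde p(y))=i$ for every $p\in F$. The point $z:=A_{x+u_i}x_0$ then lies in $V\subset U$, and for each $p\in F$ the commutation of translations gives $A_{-\tilde p(y)}z=A_{x-\tilde p(y)+u_i}x_0\in V\subset U$, which (using that $A_{\tilde p(y)}$ is invertible on $Y$ with inverse $A_{-\tilde p(y)}$) is equivalent to $z\in A_{\tilde p(y)}U$. Hence $z\in U\cap\bigcap_{p\in F}A_{\tilde p(y)}U$, and $n:=dy$ finishes the proof.

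The one subtlety I foresee is the sign choice---applying \cref{thm_polvdWoriginal} to $\{-\tilde p:p\in F\}$ rather than to $\{\tilde p:p\in F\}$---which is forced because the coloring records the forward orbit $A_{n+u_i}x_0$, whereas the desired conclusion requires a backward shift $A_{-\tilde p(y)}$ to bring $z$ back inside $U$. Everything else is a routine consequence of minimality and compactness, with no openness hypothesis on the $A_u$ required at this stage.
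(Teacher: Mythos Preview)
Your proposal is correct and follows essentially the same approach as the paper: pick an additively minimal point in $U$, use compactness of its $\Z$-orbit closure to produce a finite coloring of $\N$ via a cover by translates of $U$, clear denominators, and apply \cref{thm_polvdWoriginal} to the negated polynomials so that the resulting monochromatic configuration yields a common point in $\bigcap_{p\in F}A_{p(n)}U$. The only cosmetic differences are that the paper takes the cover to be $\{A_1U,\dots,A_rU\}$ rather than a generic $\{A_{u_i}^{-1}V\}$, and it does not append $0$ to the family of polynomials (your inclusion of $0$ gives the harmless extra conclusion $z\in U$).
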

\begin{proof}
  Let $y\in U$ be an additively minimal point, and let $Y=\overline{\{A_ny:n\in\Z\}}$ be its additive orbit closure. Since $\big(Y,(A_n)_{n\in\Z}\big)$ is a minimal topological system, the union $\bigcup_n A_nU$ covers $Y$, and by compactness there exists $r\in\N$ for which the finite union $\bigcup_{n=1}^r A_nU$ covers $Y$.
  We define a coloring $\chi:\N\to\{1,\dots,r\}$ of $\N$ by letting $\chi(n)$ be such that $A_ny\in A_{\chi(n)}U$.

  Let $m\in\N$ be a common multiple of the denominators of the coefficients of every $p\in F$.
  For each polynomial $p\in F$, let $\tilde p:n\mapsto-p(mn)$ and observe that $\tilde p\in\Z[x]$ and $\tilde p(0)=0$.
  We invoke \cref{thm_polvdWoriginal} with $\tilde F=\{\tilde p:p\in F\}$ to find some $t\in\{1,\dots,r\}$ and $x,z\in\N$ such that $\chi\big(x+\tilde p(z)\big)=t$ for every $p\in F$.
  In other words, $A_{x-p(mz)}y\in A_tU$ for all $p\in F$ and hence, letting $n=mz$, we deduce that $A_{x-t}y\in A_{p(n)U}$ for every $p\in F$.
  We conclude that
  $$A_{x-t}y\quad\in \quad \bigcap_{p\in F}A_{p(n)}U,$$
  proving the intersection to be non-empty.
\end{proof}

\subsection{Outline of the proof}\label{sec_motivation}
There are two main ingredients in the proof of \cref{thm_maindynamical}.
One is a ``complexity reduction'' technique similar to the one used by Bergelson and Leibman in \cite{Bergelson_Leibman96} to prove the polynomial van der Waerden theorem (and also used in \cite[Lemma 8.5]{Bergelson_Moreira16b}).
The other main ingredient is a fact about the algebraic behaviour of the expression $g:n\mapsto M_nA_{f(n)}\in\AN^-$ discovered (and explored) in \cite{Bergelson_Moreira15}, namely that the ``multiplicative derivative'' $n\mapsto g(nm)g(n)^{-1}$ becomes a purely additive expression whenever $f$ is a polynomial.

Before we delve into the full details of the proof of \cref{thm_maindynamical} in the next subsection, we explain the main steps of the proof in the special case when $s=1$ and $F_1$ is a singleton consisting only of the map $x\mapsto-x$.
In other words, we will show that for any finite cover of a nice $\AN^-$-topological system $X$, there is a set $U$ in the cover and some $y\in\N$ such that $U\cap M_yA_{-y}U\neq\emptyset$ (after applying the correspondence principle this special case corresponds essentially to \cref{cor_{x,xy,x+y}}).

The idea is to construct a sequence $(B_n)$ of non-empty open sets of $X$, each contained inside some member $U_n$ of the open cover, such that
\begin{equation}\label{eq_outline}
  \forall\ n<m, \quad\exists\ y=y(n,m)\in\N,\qquad\quad M_yA_{-y}B_n\supset B_m.
\end{equation}
Assuming we construct such sequence, since the open cover is finite we can find $n<m$ for which both $B_n$ and $B_m$ are contained inside the same member $U$ of the open cover; it then follows from \eqref{eq_outline} that $U\cap M_yA_{-y}U\neq\emptyset$, finishing the proof.

The construction of the sequence $(B_n)$ is natural and is illustrated by Figure \ref{figure}: starting with an arbitrary non-empty open set $B_0$, we find some $y_1$ such that $B_0\cap A_{-y_1}B_0\neq\emptyset$ (such $y_1$ exists since $B_0$ contains some additively minimal points), and then we ``push'' that intersection by $M_{y_1}$ to create $B_1:=M_{y_1}(B_0\cap A_{-y_1}B_0)$.
In particular, \eqref{eq_outline} holds for $n=0,m=1$ with $y=y_1$.

For the next step, we start similarly: assume $y_2\in\N$ is such that $B_1\cap A_{-y_2}B_1\neq\emptyset$. 
As long as we take $B_2\subset M_{y_2}(B_1\cap A_{-y_2}B_1)$, we will indeed have $B_2\subset M_{y_2}A_{-y_2}B_1$ (and hence \eqref{eq_outline} holds for $n=1$ and $m=2$).
Next we need to force $B_2$ to satisfy \eqref{eq_outline} for $n=0$ and $m=2$.
Since we know how to control the ``multiplicative derivative'' of the expression $M_yA_{-y}$, we seek to obtain \eqref{eq_outline} with $y(0,2)=y_1y_2$; in other words, we want $B_2\subset M_{y_1y_2}A_{-y_1y_2}B_0$.
Putting both conditions together, we are left to find $y_2\in\N$ so that
$$M_{y_2}(B_1\cap A_{-y_2}B_1)\cap M_{y_1y_2}A_{-y_1y_2}B_0\neq\emptyset.$$
Applying $M_{y_2}^{-1}$ it suffices to make $B_1\cap A_{-y_2}B_1\cap M_{y_1}A_{-y_1y_2}B_0\neq\emptyset$.
Using the distributivity law \eqref{eq_distributivity}, we have that $M_{y_1}A_{-y_1y_2}=A_{-y_1^2y_2}M_{y_1}$, and since $M_{y_1}B_0\supset M_1$, we see that it is sufficient to find $y_2\in\N$ such that
$$B_1\cap A_{-y_2}B_1\cap A_{-y_1^2y_2}B_1\neq\emptyset.$$

The existence of such a $y_2$ is a consequence of \cref{thm_polvdW}, so setting $B_2:=M_{y_2}(B_1\cap A_{-y_2}B_1\cap A_{-y_1^2y_2}B_1)$ we have successfully constructed $B_2$ and $y_2$ satisfying \eqref{eq_outline} whenever $n\leq2$.

Proceeding in this fashion we can construct the sequence $B_n$, each time invoking \cref{thm_polvdW} to choose $y_n\in\N$ so that
$$B_n:=M_{y_n}(B_{n-1}\cap A_{-y_n}B_{n-1}\cap A_{-y_{n-1}^2y_n}B_{n-1}\cap\cdots\cap A_{-y_1^2\cdots y_{n-1}^2y_n}B_{n-1})$$
is non-empty.
One can see, using the distributivity law \eqref{eq_distributivity}, that \eqref{eq_outline} indeed holds with $y(n,m)=y_{n+1}\cdots y_m$.
For instance, to see why $M_{y_2y_3y_4}A_{-y_2y_3y_4}B_1\supset B_4$, observe that
$$M_{y_2y_3y_4}A_{-y_2y_3y_4}B_1=M_{y_4}A_{-y_2^2y_3^2y_4}M_{y_3}M_{y_2}B_1\subset M_{y_4}A_{-y_2^2y_3^2y_4}B_3\subset B_4.$$
\begin{figure}
  \centering
  \includegraphics[width=10cm]{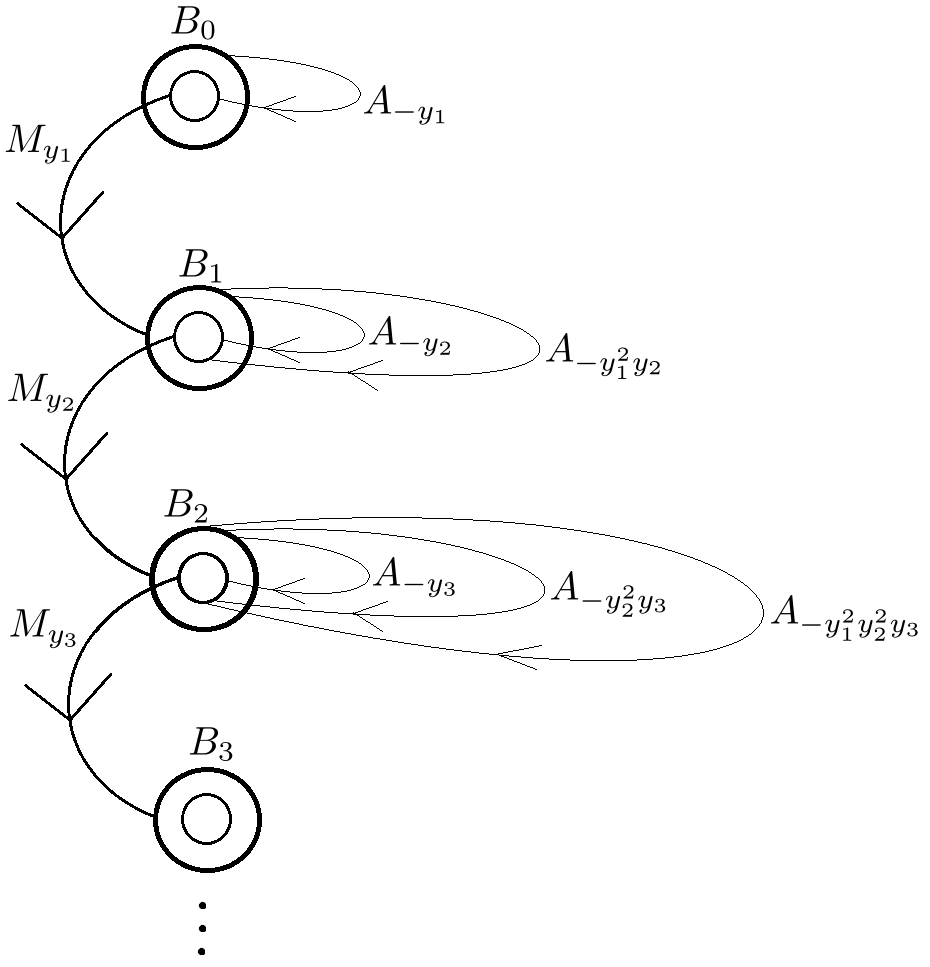}
  \caption{Construction of the sequence $(B_n)$}
  \label{figure}
\end{figure}

\subsection{Proof of \cref{thm_maindynamical}}
Let $(X,(T_g)_{g\in\AN^-})$ be an $\AN^-$-topological system with a dense set of additively minimal points and assume that each map $T_g:X\to X$ is open and injective.
Let $s\in\N$ and, for each $i=1,\dots,s$, let $F_i$ be a finite set of functions $\N^i\to\Z$ such that for all $f\in F_i$ and any $x_1,\dots,x_{i-1}\in\N$, the function $x\mapsto f(x_1,\dots,x_{i-1},x)$ is polynomial with $0$ constant term.
Let ${\mathcal U}$ be an open cover of $X$.
We need to find $U\in{\mathcal U}$ and infinitely many $s$-tuples $x_1,\dots,x_s\in\N$ such that
\begin{equation}\label{eq_proof_thm_maindynamicnovo}
  U\cap \bigcap_{0\leq j<i\leq s}\bigcap_{f\in F_{i-j}}M_{x_{j+1}\cdots x_s} A_{f(x_{j+1},\dots,x_i)}U\neq\emptyset.
\end{equation}
Since $X$ is compact, we can find a finite subcover $U_1,\dots,U_r$ of ${\mathcal U}$ with each $U_t\neq\emptyset$.

  We will construct, inductively, four sequences:
   \begin{itemize}
     \item $(t_n)_{n\geq0}$ in $\{1,\dots,r\}$,
     \item $(y_n)_{n\geq1}$ in $\N$ increasing,
     \item $(B_n)_{n\geq0}$ of non-empty open subsets of $X$,
     \item $(D_n)_{n\geq1}$ of non-empty open subsets of $X$,
   \end{itemize}
    such that $B_n\subset U_{t_n}$ (the set $D_n$ corresponds to the smaller circle inside $B_{n-1}$ in Figure \ref{figure}).
    It will be convenient to denote by $y(m,n)\in\N$ the product $y(m,n):=y_{m+1}y_{m+2}\cdots y_n$ for any $0\leq m\leq n$, with the convention that the (empty) product $y(n,n)$ equals $1$.
  
  Initiate $t_0=1$ and $B_0=U_1$.
  Using \cref{thm_polvdW} we find $y_1\in\N$ such that
  $$D_1:=B_0\cap\bigcap_{f\in F_1}A_{f(y_1)}B_0\neq\emptyset.$$
  Since $U_1,\dots,U_r$ forms an open cover of $X$ and $M_n:X\to X$ is an open map, we can find $t_1\in\{1,\dots,r\}$ such that $B_1:=M_{y_1}D_1\cap U_{t_1}$ is open and nonempty.
  Next we invoke \cref{thm_polvdW} again to find $y_2\in\N$ such that
  $$D_2:=B_1\cap\left(\bigcap_{f\in F_1}A_{f(y_2)}B_1\cap A_{y_1f(y_1y_2)}B_1\right)\cap\left(\bigcap_{f\in F_2}A_{y_1f(y_1,y_2)}B_1\right)\neq\emptyset.$$
  We then choose $t_2\in\{1,\dots,r\}$ such that $B_2:=M_{y_2}D_2\cap U_{t_2}\neq\emptyset$.
  The third step of the iteration becomes a little more complicated.
  Using \cref{thm_polvdW} one more time we find $y_3\in\N$ such that
  \begin{multline*}
    D_3:=B_2\cap\left(\bigcap_{f\in F_1}A_{f(y_3)}B_2\cap A_{y_2f(y_2y_3)}B_2\cap A_{y_1y_2f(y_1y_2y_3)}\right) \\
    \cap\left(\bigcap_{f\in F_2}A_{y_2f(y_2,y_3)}B_2\cap A_{y_1y_2f(y_1y_2,y_3)}B_2\cap A_{y_1y_2f(y_1,y_2 y_3)}B_2\right)\\ \cap\left(\bigcap_{f\in F_3}A_{y_1y_2f(y_1,y_2,y_3)}B_2\right)\neq\emptyset.
  \end{multline*}
  We then choose $t_3\in\{1,\dots,r\}$ such that $B_3:=M_{y_3}D_3\cap U_{t_3}\neq\emptyset$.

  In general, for $n\geq2$, assume that $(t_m)_{m=0}^{n-1}$, $(y_m)_{m=1}^{n-1}$, $(B_m)_{m=0}^{n-1}$ and $(D_m)_{m=1}^{n-1}$ have been constructed.
  For each $i\in\{1,\dots,s\}$ and each $f\in F_i$, we define the collection $G_n(f)$ of all functions $g:\Z\to\Z$ of the form
  $$g:z\mapsto y(m_1,n-1)f\big(y(m_1,m_2),\ y(m_2,m_3),\ \dots,\ y(m_i,n-1)\cdot z\big)$$
  for any choice $0\leq m_1<m_2<\cdots<m_i<n$.
  If $i>n$ then we set $G_n(f)$ to be empty.
  Observe that each $g\in G_n(f)$ is a polynomial with rational coefficients satisfying $g(0)=0$.

  Invoking \cref{thm_polvdW}, we can find $y_n\in\N$ satisfying
  \begin{equation}\label{eq_proof_main41}
    D_n:=B_{n-1}\cap\bigcap_{i=1}^s\bigcap_{f\in F_i}\bigcap_{g\in G_n(f)} A_{g(y_n)}B_{n-1}\neq\emptyset.
  \end{equation}
  Let $t_n\in\{1,\dots,r\}$ be such that the intersection $B_n:=M_{y_n}D_n\cap U_{t_n}\neq\emptyset$ (observe that $B_n$ is open because $M_{y_n}$ is an open map).
  This finishes the construction of $y_n$, $t_n$, $D_n$, $B_n$.
  It is immediate from the construction that $B_n\subset U_{t_n}$ for every $n\geq0$.
  Moreover, $B_n\subset M_{y_n}D_n\subset M_{y_n}B_{n-1}$.
  Iterating this observation we obtain
  \begin{equation}\label{eq_proof_maindynamicaltheoremnovo}
  \forall m\leq n,\qquad B_n\subset M_{y(m,n)}B_m.
  \end{equation}

  Since the sequence $(t_n)_{n\geq0}$ takes only finitely many values, there exists $t\in\{1,\dots,r\}$ and infinitely many tuples of natural numbers $n_0<\cdots<n_s$ such that $t_{n_i}=t$.
  For each $i\in\{1,\dots,s\}$, let $x_i=y(n_{i-1},n_i)$.
  We claim that \eqref{eq_proof_thm_maindynamicnovo} is satisfied with $U=U_t$ and with this choice of $x_i$.
  We will show that the intersection in \eqref{eq_proof_thm_maindynamicnovo} is non-empty by proving that it contains $B_{n_s}$.
  Since $B_{n_j}\subset U_t$ for every $j\in\{0,\dots,s\}$, it suffices to show that
  \begin{equation}\label{eq_proof_main_dynamical_last}
    \forall\ 0\leq j<i\leq s,\qquad \forall\ f\in F_{i-j},\qquad B_{n_s}\subset M_{x_{j+1}\cdots x_s} A_{f(x_{j+1},\dots,x_i)}B_{n_j}.
  \end{equation}
  Now fix $0\leq j<i\leq s$ and $f\in F_{i-j}$.
  Observe that there exists some $g\in G_{n_i}(f)$ such that $f(x_{j+1},\dots,x_i)=g(y_{n_i})/y(n_j,n_i-1)$.
  Using \eqref{eq_proof_maindynamicaltheoremnovo}, we conclude
  \begin{eqnarray*}
    B_{n_s}&\subset&M_{y(n_i,n_s)}B_{n_i}\subset M_{y(n_i,n_s)}M_{y_{n_i}}D_{n_i}\\ \text{using \eqref{eq_proof_main41}}&\subset& M_{y(n_i-1,n_s)}\Big(A_{g(y_{n_i})}B_{n_i-1}\Big)\\ \text{using \eqref{eq_proof_maindynamicaltheoremnovo}} &\subset&M_{y(n_i-1,n_s)}A_{g(y_{n_i})}M_{y(n_j,n_i-1)}B_{n_j}\\  \text{using \eqref{eq_distributivity}} &=&M_{y(n_i-1,n_s)}M_{y(n_j,n_i-1)}A_{g(y_{n_i})/y(n_j,n_i-1)}B_{n_j}\\ &=&M_{x_{j+1}\cdots x_s}A_{f(x_{j+1},\dots,x_i)}B_{n_j}.
  \end{eqnarray*}
  This proves \eqref{eq_proof_main_dynamical_last} and finishes the proof of \cref{thm_maindynamical}.

\section{An elementary proof that the \pattern{} $\{x,x+y,xy\}$ is Ramsey}\label{sec_combinatorics}
In this section we present an elementary rendering of the above proof of \cref{thm_mainN}.
To keep things shorter and more elegant, we prove only \cref{cor_{x,xy,x+y}}; the proof in this section can be adapted to obtain the full strength of \cref{thm_mainN}.
We remark that, while this proof is short and essentially self contained, it is, in essence, a combinatorial rephrasing of the dynamical proof.

We will use the following version of van der Waerden's theorem; this version is a particular case of \cite[Theorem 4.5]{Bergelson_Hindman01}.
\begin{theorem}\label{thm_Sz}
  Let $E\subset\N$ be piecewise syndetic, and let $F\subset\N$ be finite. Then there exists $n\in\N$ such that the intersection
  $$S\cap\bigcap_{m\in F}(S-mn)$$
  is piecewise syndetic.
\end{theorem}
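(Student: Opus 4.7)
The statement is essentially a piecewise syndetic version of van der Waerden's theorem, and my plan is to derive it via symbolic dynamics combined with the topological van der Waerden theorem of Furstenberg and Weiss. Without loss of generality I may assume $F = \{1, 2, \dots, K\}$ with $K = \max F$, since enlarging $F$ only shrinks $\bigcap_{m \in F}(S - mn)$ and supersets of piecewise syndetic sets are piecewise syndetic.

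Next I would encode $S$ symbolically: set $x := 1_S \in \{0,1\}^\N$ and let $X$ be the orbit closure of $x$ under the shift $T$. A short compactness argument --- exploiting the fact that $S - G$ is thick for some finite $G \subset \N$ and pigeonholing the color $g: I_\ell \to G$ with $g(a) + a \in S$ on long intervals $I_\ell \subset S - G$ --- produces a minimal point $y \in X$ with $y_1 = 1$. I would then invoke the classical symbolic characterization of piecewise syndeticity: for every open neighborhood $V$ of $y$ in $X$, the return set $\{m \geq 0 : T^m x \in V\}$ is piecewise syndetic in $\N$.

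Because $y$ is minimal, the recurrence set $\{m : T^m y \in W\}$ is syndetic for every open $W \ni y$. Applying ordinary van der Waerden's theorem to this syndetic set, with $W := \{z \in X : z_1 = 1\}$, I obtain $n \in \N$ and $m_0 \geq 0$ such that $T^{m_0 + jn}y \in W$ for $j = 0, 1, \dots, K$. Replacing $y$ by the minimal point $T^{m_0}y$, I may assume $y_1 = y_{1+n} = \cdots = y_{1+Kn} = 1$.

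Taking $V := \{z \in X : z_1 = z_{1+n} = \cdots = z_{1+Kn} = 1\}$, an open neighborhood of $y$, the characterization gives that $R := \{m \geq 0 : T^m x \in V\}$ is piecewise syndetic. By construction $R + 1 = S \cap \bigcap_{j=1}^{K}(S - jn)$, and translations preserve piecewise syndeticity, so this intersection is piecewise syndetic, as required. The main obstacle is the classical symbolic characterization used in the second paragraph --- that pws sets arise precisely as return time sets to open neighborhoods of minimal points in the orbit closure of $1_S$ --- but this fact is well-established in the dynamical Ramsey literature, so once it is granted the argument reduces to a direct application of topological van der Waerden to a minimal subsystem.
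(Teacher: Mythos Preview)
The paper does not actually prove this statement: it is quoted as ``a particular case of \cite[Theorem 4.5]{Bergelson_Hindman01}'' and used as a black box in Section~5. So there is no in-paper proof to compare against.

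Your argument is correct and follows the standard topological-dynamics route to this result. The reduction to $F=\{1,\dots,K\}$ is fine for the reason you give. The two facts you lean on --- that a set $S\subset\N$ is piecewise syndetic if and only if the orbit closure of $1_S$ in $\{0,1\}^{\N}$ contains a minimal point with a $1$ in some coordinate, and that return-time sets of $1_S$ to neighbourhoods of such minimal points are piecewise syndetic --- are indeed classical (they appear, for example, in Furstenberg's book). Once those are granted, your use of van der Waerden on the syndetic return set of the minimal point $y$ to the cylinder $\{z:z_1=1\}$ produces the desired $n$, and the identity $R+1=S\cap\bigcap_{j=1}^{K}(S-jn)$ is verified exactly as you wrote. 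One cosmetic remark: you do not really need the pigeonhole sketch in the second paragraph --- writing $S=S'\cap T'$ with $S'$ syndetic and $T'$ thick, any subsequential limit of $T^{a_\ell}1_S$ along intervals $[a_\ell+1,a_\ell+\ell]\subset T'$ lies in the orbit closure of $1_S$ and inherits the bounded-gap property of $1_{S'}$, so any minimal point beneath it has a $1$; this is perhaps slightly cleaner than the colouring description, but either way the conclusion is standard.
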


\begin{proof}[Proof of {\cref{cor_{x,xy,x+y}}}]
  Let $r\in\N$ and let $\N=C_1\cup\cdots\cup C_r$ be an arbitrary coloring (or partition) of $\N$. We need to find $t\in\{1,\dots,r\}$ and (infinitely many) $x,y\in\N$ satisfying
\begin{equation}\label{eq_2proof_thm_mainN1q}
  \big\{x,x+y,xy\big\}\subset C_t.
\end{equation}
We will construct inductively four sequences:
\begin{itemize}
  \item an increasing sequence $(y_i)_{i\geq1}$ of natural numbers,
  \item two sequences $(B_i)_{i\geq0}$ and $(D_i)_{i\geq1}$ of piecewise syndetic subsets of $\N$,
  \item a sequence $(t_i)_{i\geq0}$ of colors in $\{1,\dots,r\}$,
\end{itemize}
such that $B_i\subset C_{t_i}$ for every $i\geq0$.

Initiate by choosing $t_0\in\{1,\dots,r\}$ such that $C_{t_0}$ is piecewise syndetic, and let $B_0:=C_{t_0}$.
Assume now that $i\geq1$ and that we have already defined $(t_j)_{j=0}^{i-1}$, $(y_j)_{j=1}^{i-1}$, $(B_j)_{j=0}^{i-1}$ and $(D_j)_{j=1}^{i-1}$.
We apply \cref{thm_Sz} to find $y_i\in\N$ such that
\begin{equation}
  \label{eq_proofvdWq}D_i:=B_{i-1}\cap\bigcap_{j=1}^i\Big(B_{i-1}-y_j^2\cdots y_{i-1}^2y_i\Big)
\end{equation}
is piecewise syndetic (with the convention that for $i=j$, the (empty) product $y_j^2\cdots y_{i-1}^2$ equals $1$).
Observe that $y_iD_i$ is also piecewise syndetic, and therefore \cref{prop_pwspartition} provides some $t_i\in\{1,\dots,r\}$ such that $B_i:=y_iD_i\cap C_{t_i}$ is piecewise syndetic.
This finishes the construction of the sequences.

Note that $B_i\subset y_iD_i\subset y_iB_{i-1}$; iterating this fact we obtain
\begin{equation}\label{eq_proof2q}\forall\  0\leq j<i,\qquad B_i\subset y_{j+1}y_{j+2}\cdots y_iB_j.
\end{equation}
Since the sequence $(t_i)$ takes only finitely many values, there exist (infinitely many) $j<i$ such that $t_i=t_j$.
Let $\tilde x\in B_i$, let $y:=y_{j+1}\cdots y_i$, and let $x:=\tilde x/y$.
We claim that $\{x,x+y,xy\}\subset C_{t_i}$, which will complete the proof.
Indeed $xy=\tilde x\in B_i\subset C_{t_i}$ and from \eqref{eq_proof2q} we have $xy\in B_i\subset y B_j$ so $x\in B_j\subset C_{t_j}=C_{t_i}$.
Finally we have
\begin{eqnarray*}
  y(x+y)&=&\tilde x+y^2\in B_i+y^2\subset y_iD_i+y^2\\  \text{using \eqref{eq_proofvdWq}}\qquad&\subset& y_i\big(B_{i-1}-y_{j+1}^2\cdots y_{i-1}^2y_i\big)+y^2 \\ \text{using \eqref{eq_proof2q}}\qquad&\subset&y_i\big(y_{j+1}\cdots y_{i-1}B_j-y_{j+1}^2\cdots y_{i-1}^2y_i\big)+y^2\\&=& yB_j-y^2+y^2=yB_j,
\end{eqnarray*}
which implies that $x+y\in B_j\subset C_{t_j}=C_{t_i}$.
\end{proof}

\begin{remark}
As an alternative approach, one could replace piecewise syndetic sets with sets having positive upper density and replace van der Waerden's theorem with (a suitable form of) Szemer\'edi's theorem in arithmetic progressions \cite{Szemeredi75}.
\end{remark}

\section{Ramsey theoretic applications}\label{sec_corollaries}
In this section we derive some corollaries of our main result, \cref{thm_mainN}, by specifying values of $s$ and sets of functions $F_i$ of interest.
For convenience, we recall the formulation of \cref{thm_mainN}.

\begin{named}{\cref*{thm_mainN}}{}
Let $s\in\N$ and, for each $i=1,\dots,s$, let $F_i$ be a finite set of functions $\N^i\to\Z$ such that for all $f\in F_i$ and any $x_1,\dots,x_{i-1}\in\N$, the function $x\mapsto f(x_1,\dots,x_{i-1},x)$ is polynomial with $0$ constant term.
Then for any finite coloring of $\N$ there exists a color $C\subset\N$ and (infinitely many) $(s+1)$-tuples $x_0,\dots,x_s\in\N$ such that 
$$\{x_0\cdots x_s\}\cup\Big\{x_0\cdots x_j+f(x_{j+1},\dots,x_i):0\leq j<i\leq s, f\in F_{i-j}\Big\}\subset C.$$
\end{named}
By specifying $s=1$ we obtain the following result:
\begin{corollary}\label{cor_onlyy}
  Let $k\in\N$ and let $f_1,\dots,f_k\in\Z[x]$ satisfy $f_\ell(0)=0$ for each $\ell$.
  Then for any finite coloring of $\N$ there exist $x,y\in\N$ such that the set
  $$\{xy,x+f_1(y),\dots,x+f_k(y)\}$$
  is monochromatic.
\end{corollary}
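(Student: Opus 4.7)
The plan is to simply specialize \cref{thm_mainN} to the case $s=1$, taking $F_1 := \{f_1,\dots,f_k\}$. First I would verify that this $F_1$ satisfies the hypothesis of \cref{thm_mainN}: we need that for each $f\in F_1$ the map $x\mapsto f(x)$ is a polynomial with zero constant term, and this is exactly our assumption that each $f_\ell\in\Z[x]$ satisfies $f_\ell(0)=0$. (For $i=1$ the "freezing" of variables $x_1,\dots,x_{i-1}$ is vacuous, so no further checking is needed.)

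Next I would unpack the conclusion of \cref{thm_mainN} in this case. The theorem produces a color $C$ and a pair $(x_0,x_1)\in\N^2$ such that
\[
\{x_0 x_1\}\cup\bigl\{x_0+f(x_1):f\in F_1\bigr\}\subset C.
\]
Renaming $x:=x_0$ and $y:=x_1$ and using the definition $F_1=\{f_1,\dots,f_k\}$, this is precisely the monochromatic configuration $\{xy,\,x+f_1(y),\dots,x+f_k(y)\}\subset C$ claimed in the corollary.

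There is no real obstacle here: the content is entirely in \cref{thm_mainN}, and the corollary is just a convenient reformulation of its $s=1$ instance in polynomial language. The only thing worth a comment is that \cref{thm_mainN} additionally gives us infinitely many such pairs $(x,y)$, so one automatically obtains an infinitude of monochromatic configurations of this shape, not just a single one.
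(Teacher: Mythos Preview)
Your proposal is correct and matches the paper's own derivation exactly: the paper simply states that \cref{cor_onlyy} follows from \cref{thm_mainN} ``by specifying $s=1$,'' which is precisely the specialization you carry out and unpack.
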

Observe that by putting $f_1(y)=0$, the monochromatic configuration in the previous corollary contains $x$.

In a different direction, letting $s$ be arbitrary but requiring each $F_i$ to consist of only the zero function and the function $f_i(x_1,\dots,x_i)=x_1\cdots x_i$ we deduce:
\begin{corollary}\label{cor_onlyproduct}
  For any $s\in\N$ and any finite coloring of $\N$, there exist $x_0,\dots,x_s\in\N$ such that the set
  $$\left\{\prod_{\ell=0}^jx_\ell\ :\ 0\leq j\leq s\right\}\cup \left\{\prod_{\ell=0}^jx_\ell+\prod_{\ell=j+1}^ix_\ell\ :\ 0\leq j<i\leq s\right\}$$
  is monochromatic.
\end{corollary}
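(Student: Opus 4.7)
The approach is to derive this corollary as a direct specialization of \cref{thm_mainN} by choosing, for each $i \in \{1,\dots,s\}$, the family
\[
F_i := \{\mathbf{0},\mu_i\}, \qquad \mu_i(x_1,\dots,x_i) := x_1 \cdots x_i,
\]
where $\mathbf{0}$ denotes the zero function on $\N^i$. The first thing I would verify is the hypothesis on the $F_i$: fixing $x_1,\dots,x_{i-1} \in \N$, the zero function restricts to the constant $0$ (trivially a polynomial with zero constant term), while $\mu_i$ restricts to the linear polynomial $x \mapsto (x_1 \cdots x_{i-1})\,x$, which also has zero constant term. Both functions are therefore admissible.

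Applying \cref{thm_mainN} with this choice yields a color $C \subset \N$ and (infinitely many) tuples $x_0,\dots,x_s \in \N$ such that every element of $\{x_0 \cdots x_s\} \cup \{x_0 \cdots x_j + f(x_{j+1},\dots,x_i) : 0 \leq j < i \leq s,\, f \in F_{i-j}\}$ lies in $C$. I would then split the conclusion according to the two choices $f = \mathbf{0}$ and $f = \mu_{i-j}$. The former gives $\prod_{\ell=0}^j x_\ell \in C$ for every $0 \leq j < i \leq s$, which (taking $i = j+1$) covers the values $j = 0,1,\dots,s-1$, and combined with the guarantee $x_0 \cdots x_s \in C$ this yields the entire first set $\{\prod_{\ell=0}^j x_\ell : 0 \leq j \leq s\}$. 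The latter gives precisely $\prod_{\ell=0}^j x_\ell + \prod_{\ell=j+1}^i x_\ell \in C$ for every $0 \leq j < i \leq s$, which is exactly the second set in the statement.

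There is no substantial obstacle here: the proof is purely a bookkeeping exercise, since all the heavy lifting has already been done in the proof of \cref{thm_mainN}. The only point to be careful about is the matching of indices between the families $F_i$ and the multi-indices $(j,i)$ appearing in the theorem's conclusion, to ensure that $\mu_{i-j}$ evaluated on $(x_{j+1},\dots,x_i)$ really produces the desired product $x_{j+1} \cdots x_i$.
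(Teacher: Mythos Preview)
Your proposal is correct and matches the paper's approach exactly: the paper states just before \cref{cor_onlyproduct} that the result follows from \cref{thm_mainN} by taking each $F_i$ to consist only of the zero function and the product function $f_i(x_1,\dots,x_i)=x_1\cdots x_i$, and you have simply spelled out the straightforward bookkeeping that this entails.
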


Observe that we do not require that each function $f\in F_i$ in \cref{thm_mainN} be a polynomial in all its variables (but only in the last variable).
In particular, we obtain the following examples:
\begin{example}
  The following are \monochromatic{} \patterns{}:
  \begin{enumerate}
    \item $\{x,x+y,xy,xyz,x+z,x+z^y\}$;
    \item $\{x,xy,xyz,x+f(y)z\}$ for any function $f:\N\to\Z$;
    \item $\{x,xy,xyz,xyzt,x+z^y,x+t^z,x+f(y)t^{g(z)}\}$ for any functions $f,g:\N\to\N$.
  \end{enumerate}
\end{example}
Finally, we prove \cref{cor_partitionregularequationgeneral} from the introduction.
\begin{named}{\cref*{cor_partitionregularequationgeneral}}{}
 Let $k\in\N$ and $c_1,\dots,c_k\in\Z\setminus\{0\}$ be such that $c_1+\cdots+c_k=0$.
  Then for any finite coloring of $\N$ there exist pairwise distinct $a_0,\dots,a_k\in\N$, all of the same color, such that
  \begin{equation}\label{eq_monochromaticcorollary}
  c_1a_1^2+\cdots+c_ka_k^2=a_0.
  \end{equation}
\end{named}
\begin{proof}
Consider the quadratic polynomials
$$p(t)=\sum_{\ell=1}^kc_\ell(1+\ell t)^2,\qquad q(t)=\sum_{\ell=1}^{k-1}c_\ell(1+\ell t)^2+c_k(1+2kt)^2.$$
Both have rational coefficients and a root at $t=0$.
On the other hand, the derivatives
$$p'(t)=2\sum_{\ell=1}^k\ell c_\ell(1+\ell t),\qquad q'(t)=2\sum_{\ell=1}^{k-1}\ell c_\ell(1+\ell t)+4kc_k(1+2kt)$$
can not both vanish at $t=0$.
Therefore at least one of these polynomials must have a second root at some $t\in\Q\setminus\{0\}$.
Assume $p$ has a second root (an analogous argument works in the alternative case).
Letting $d$ be the denominator of $t$ and $u_\ell=d(1+\ell t)$ for each $\ell=1,\dots,k$, we now have pairwise distinct $u_1,\dots,u_k\in\Z$ such that $c_1u_1^2+\cdots+c_ku_k^2=0$.
We can also assume that $c_1u_1+\cdots+c_ku_k\neq0$ by changing some nonzero $u_\ell$ into $-u_\ell$ if necessary.

Let $b=2(c_1u_1+\cdots+c_ku_k)$.
  Let $\chi:\N\to\{1,\dots,r\}$ be an arbitrary finite coloring of $\N$ and define a new coloring $\tilde\chi$ of $\N$ in $r+b-1$ colors by:
  $$\tilde\chi(n):=\left\{\begin{array}{cl}\chi\left(\tfrac nb\right)&\text{ if }n\text{ is divisible by }b\\ r+(n\bmod b)&\text{ otherwise}\end{array}\right.$$
  where $n\bmod b\in\{0,1,\dots,b-1\}$ is the remainder of the division of $n$ by $b$.
  Next apply \cref{cor_onlyy} to find $x,y\in\N$ such that the set $\{x,xy,x+y,x+u_1y,\dots,x+u_ky\}$ is monochromatic with respect to $\tilde\chi$.

  Observe that, in view of the construction of the coloring $\tilde\chi$, all the numbers $x,xy,x+y$ share the same congruence class modulo $b$, which implies that both $x$ and $y$ are divisible by $b$.
  We deduce that the set $\left\{\tfrac{xy}b,\tfrac{x+u_1y}b,\dots,\tfrac{x+u_ky}b\right\}$ consists of integers and is monochromatic with respect to $\chi$.
  Letting $a_0=\tfrac{xy}b$ and $a_\ell=\tfrac{x+u_\ell y}b$ for $\ell=1,\dots,k$, we have the desired relation \eqref{eq_monochromaticcorollary}.
\end{proof}
\section{Extensions to LID}\label{sec_finalremarks}
In this paper so far we have restricted our attention to configurations inside $\N$, but it makes sense to consider analogous questions in a more general setup.
It turns out that our arguments apply without much additional effort to a natural class of rings studied in \cite{Bergelson_Moreira16}, namely the class of LIDs:
\begin{definition}
  An integral domain $R$ is called a \emph{large ideal domain (LID)} if every non-trivial ideal of $R$ has finite index in $R$.
\end{definition}
Examples of LID's include all fields, the ring $\Z$ (and more generally the ring of integers of any number field), and the ring $\F[x]$ of polynomials over a finite field.
Observe that $\N$, not being a ring, is not strictly speaking a LID.
In fact, one can define \emph{LID semirings} (a class which would include $\N$) but we will not pursue this possibility here.

Given a LID $R$, we denote by $\AR$ its \emph{affine semigroup}, defined by $\AR:=\{x\mapsto ax+b:a,b\in R, a\neq0\}$.
The semigroup $\AR$ is a group if and only if $R$ is a field.

The following version of the affine topological correspondence principle for LID can be proved in the same way as \cref{thm_correspondence}.

\begin{theorem}\label{thm_correspondenceLID}
  Let $R$ be a LID and let $\AR$ denote the semigroup of all affine transformations of $R$.
  There exists an $\AR$-topological system $(X,(T_g)_{g\in\AR})$ with a dense set of additively minimal points, such that each map $T_g:X\to X$ is open and injective, and with the property that for any finite coloring $R=C_1\cup\cdots\cup C_r$ there exists an open cover $X=U_1\cup\cdots\cup U_r$ such that for any $g_1,\dots,g_k\in\AR$ and $t\in\{1,\dots,r\}$,
  \begin{equation}\label{eq_thm_correspondenceLID}
    \bigcap_{\ell=1}^kT_{g_\ell}(U_t)\neq\emptyset\qquad\Longrightarrow\qquad\bigcap_{\ell=1}^kg_\ell(C_t)\neq\emptyset.
  \end{equation}
\end{theorem}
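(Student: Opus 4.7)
The plan is to run the proof of Theorem \ref{thm_correspondence} essentially verbatim, with $\N$ replaced by $R$ and $\beta\N$ replaced by the Stone-\v Cech compactification $\beta R$ of the discrete set $R$.

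First I would recall that $\beta R$ is the space of ultrafilters on $R$, equipped with the topology for which the sets $\overline{E} := \{p \in \beta R : E \in p\}$, $E \subset R$, form a base. For each $g \in \AR$ I define
$$T_g(p) := \{E \subset R : g^{-1}(E) \in p\}$$
on the set $\beta R \setminus R$ of non-principal ultrafilters. The analog of Lemma \ref{lemma_ultrafilterswork} then goes through line by line: each $g : R \to R$ is injective because $R$ is an integral domain and the leading coefficient of $g$ is nonzero, which is the only input beyond formal manipulation used in the original argument for continuity, openness, injectivity, and the composition law. This produces an $\AR$-topological system $(\beta R \setminus R, (T_g)_{g \in \AR})$ acting by continuous, open, injective maps. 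A small simplification relative to the $\N$ case is that here $g(R) \subset R$, so we never need to intersect with $\N$ to land back in the carrier set.

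Next, I would set $X := \overline{K(\beta R, +)}$, the closure of the additively minimal ultrafilters. To carry out the rest of the argument---namely, to show $T_g(X) \subset X$, that the restriction $T_g|_X$ remains open, and to deduce the correspondence property \eqref{eq_thm_correspondenceLID}---I would invoke two facts. The first is the characterization (a standard extension of \cite[Corollary 4.41]{Hindman_Strauss98} to the countable discrete semigroup $(R,+)$) that an ultrafilter lies in $\overline{K(\beta R, +)}$ if and only if each of its members is piecewise syndetic in $R$. The second is the analog of Proposition \ref{prop_pwsaffine}: $E \subset R$ is piecewise syndetic iff $g(E)$ is, for every $g \in \AR$. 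Together these give $T_g(X) \subset X$ and openness of $T_g$ on $X$, and the final step---given a coloring $R = C_1 \cup \cdots \cup C_r$, setting $U_t := \overline{C_t} \cap X$ and chasing a point $p \in \bigcap_{\ell} T_{g_\ell}(U_t)$ back through each $T_{g_\ell}$ via preimages of cofinite subsets of $C_t$---then shows $g_\ell(C_t) \in p$ for every $\ell$, so the finite intersection $\bigcap_\ell g_\ell(C_t)$ belongs to $p$ and is in particular non-empty.

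The main obstacle is the LID analog of Proposition \ref{prop_pwsaffine}. Translations and multiplication by a unit cause no issue; the content is that multiplication by a non-unit $a \in R \setminus \{0\}$ preserves piecewise syndeticity in both directions. This is exactly where the LID hypothesis enters crucially: because the ideal $aR$ has finite index in $R$, finitely many additive translates of $aR$ cover $R$, and this lets one transfer both syndeticity and thickness between a set $E$ and its image $aE = g(E)$. The requisite notion of piecewise syndeticity in a general LID, together with its invariance under $\AR$, is developed in \cite{Bergelson_Moreira16}; given that framework, the remainder of the proof is a verbatim translation of Section \ref{sec_correspondence} into the LID language.
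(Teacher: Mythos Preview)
Your proposal is correct and matches the paper's approach essentially line for line: the paper states that Theorem~\ref{thm_correspondenceLID} ``can be proved in the same way as \cref{thm_correspondence}'' and that ``the only non-trivial step\dots is the following extension of \cref{prop_pwsaffine},'' which is exactly the obstacle you isolate. The one cosmetic difference is that the paper supplies this extension directly as Lemma~\ref{lemma_pwsLID} (proving that dilations preserve piecewise syndeticity in a LID) rather than citing \cite{Bergelson_Moreira16}.
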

The only non-trivial step in generalizing \cref{thm_correspondence} to this setting is the following extension of \cref{prop_pwsaffine}, which crucially relies on the the fact that $R$ is a LID.

\begin{definition}Let $(R,+)$ be an abelian group.
\begin{itemize}
  \item A set $S\subset R$ is called \emph{syndetic} if there exists a finite set $F\subset R$ such that $R=S-F$.
  \item A set $T\subset R$ is called \emph{thick} if for any finite set $F\subset R$ there exists $x\in R$ such that $x+F\subset T$.
  \item A set $B\subset R$ is called \emph{piecewise syndetic} if $B=S\cap T$ for a syndetic set $S\subset R$ and a thick set $T\subset R$.
  \end{itemize}
\end{definition}

\begin{lemma}\label{lemma_pwsLID}
  Let $R$ be a LID and let $B\subset(R,+)$ be piecewise syndetic.
  Then for any $a\in R\setminus\{0\}$, the dilation $aB$ is also piecewise syndetic.
\end{lemma}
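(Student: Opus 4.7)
My plan is to decompose $B = S \cap T$ with $S$ syndetic and $T$ thick and to produce an analogous decomposition of $aB$. Since $R$ is an integral domain, multiplication by $a$ is injective, so $aB = aS \cap aT$; the task therefore reduces to showing that $aS$ is syndetic and producing a thick set $T' \subset R$ for which $aS \cap T' = aS \cap aT$.

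The LID hypothesis enters precisely in showing that $aS$ is syndetic. Since $aR$ is a nontrivial ideal of $R$, it has finite index, so I can fix a finite set $E \subset R$ of coset representatives with $R = aR + E$. Combined with a finite $F_0$ witnessing $R = S - F_0$, any $r \in R$ can be written as $r = ar' + e$ with $e \in E$; then some $f_0 \in F_0$ satisfies $r' + f_0 \in S$, which gives $r + (af_0 - e) = a(r' + f_0) \in aS$. Hence $R = aS - F'$ for the finite set $F' = \{e - af_0 : e \in E,\ f_0 \in F_0\}$, so $aS$ is syndetic.

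The subtler step is producing the thick part, and this is where I expect the main difficulty: the set $aT$ itself is typically \emph{not} thick, since it lies inside the proper subgroup $aR$ (for instance, $2\Z$ is not thick in $\Z$). I would sidestep this by padding, taking $T' := aT \cup (R \setminus aR)$. For any finite $K \subset R$, thickness of $T$ supplies an $x' \in R$ with $x' + \{k/a : k \in K \cap aR\} \subset T$; setting $x = ax'$, a direct case analysis shows that $x + k \in aT$ when $k \in aR$ and $x + k \in R \setminus aR$ otherwise, so $x + K \subset T'$ and $T'$ is thick. Finally, $aS \subset aR$ is disjoint from $R \setminus aR$, so the padding is invisible to the intersection with $aS$: $aS \cap T' = aS \cap aT = aB$. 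This exhibits $aB$ as the intersection of a syndetic set and a thick set, completing the proof.
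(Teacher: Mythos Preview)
Your proof is correct and follows essentially the same approach as the paper: both decompose $B=S\cap T$, show $aS$ is syndetic using a finite set of coset representatives for $aR$ in $R$, and pad $aT$ to the thick set $T'=aT\cup(R\setminus aR)$, then observe $aS\cap T'=aB$. (There is a harmless sign slip in your definition of $F'$: from $r+(af_0-e)\in aS$ one gets $F'=\{af_0-e\}$, not $\{e-af_0\}$.)
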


\begin{proof}
Let $S$ and $T$ be such that $B=S\cap T$ and $S$ is syndetic and $T$ is thick.
Let $T'=aT\cup(R\setminus aR)$ and let $S'=aS$.
Then clearly $aB=T'\cap S'$.
We now claim that $T'$ is thick and $S'$ is syndetic, which will finish the proof.

Let $F\subset R$ be a finite set such that $S-F=R$.
Then $S'-aF=aR$.
Since $R$ is a LID, the ideal $aR$ has finite index in $R$.
Let $\tilde F$ be a (finite) set of co-set representatives.
Then $aR-\tilde F=R$ and hence $S'-(aF+\tilde F)=R$.
Taking $F':=aF+\tilde F$ we deduce that $S'-F'=R$ and $S'$ is syndetic, as desired.

Next we show that $T'$ is thick.
Let $F\subset R$ be an arbitrary finite set; we will find $x\in R$ such that $x+F\subset T'$.
Split $F=F_1\cup F_2$ where $F_1=F\cap aR$ and $F_2=F\setminus F_1$.
If $F$ is disjoint from $aR$ then it is already contained in $T'$.
Let $F'=F_1/a$ and let $x'\in R$ be such that $x'+F'\subset T$.
Then, taking $x=ax'$ we have $x+F=a(x'+F')\cup ax'+F_2$.
Since $x'+F'\subset T$, the first term $a(x'+F')$ is inside $aT\subset T'$.
Since $F_2$ is disjoint from $aR$, also $ax'+F_2$ is disjoint from $aR$, and hence contained in $T'$.
Therefore $x+F\subset T'$, as desired.
\end{proof}

Observe that \cref{lemma_pwsLID} does not hold in general rings, not even in every principal ideal domain.
An example is provided by the PID $\Q[x]$ of all polynomials with rational coefficients: while $\Q[x]$ is itself a piecewise syndetic set, the ideal $x\Q[x]$ has infinite index as an additive subgroup and hence can not be piecewise syndetic.

One can then obtain a dynamical recurrence result analogous to \cref{thm_maindynamical} which, together with \cref{thm_correspondenceLID}, implies the following combinatorial corollary.

\begin{theorem}\label{thm_main}
Let $R$ be a LID, let $s\in\N$ and, for each $i=1,\dots,s$, let $F_i$ be a finite set of functions $R^i\to R$ such that for all $f\in F_i$ and any $x_1,\dots,x_{i-1}\in R$, the function $x\mapsto f(x_1,\dots,x_{i-1},x)$ is polynomial with $0$ constant term.
Then for any finite coloring of $R$ there exists a color $C\subset R$ and (infinitely many) $(s+1)$-tuples $x_0,\dots,x_s\in R$ such that 
$$\{x_0\cdots x_s\}\cup\Big\{x_0\cdots x_j+f(x_{j+1},\dots,x_i):0\leq j<i\leq s, f\in F_{i-j}\Big\}\subset C.$$
\end{theorem}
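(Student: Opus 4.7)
The plan is to follow the same two-stage strategy used to prove \cref{thm_mainN}: first establish a dynamical recurrence theorem for $\AR$-topological systems that mirrors \cref{thm_maindynamical}, then deduce the combinatorial conclusion by invoking \cref{thm_correspondenceLID}. The second stage is formally identical to the deduction in \cref{sec_correspondence}: one appends the zero function to $F_s$, applies \cref{thm_correspondenceLID} to a given finite coloring to obtain an open cover, applies the $\AR$ dynamical theorem to this cover, and then extracts $x_0$ from an element $x$ of the resulting intersection by writing $x_0 = x/(x_1\cdots x_s)$. Since $R$ is an integral domain and $x \in x_1\cdots x_s\, C_t$ by construction, the quotient lives in $R$, and the remaining membership checks go through verbatim.

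For the dynamical half, I would run the induction of \cref{sec_proofmain} word for word, building the four sequences $(t_n)$, $(y_n)$, $(B_n)$, $(D_n)$ with $y_n \in R\setminus\{0\}$ and $B_n \subset U_{t_n}$. The only algebraic identity the induction relies on is the distributivity law $M_u A_v = A_{uv} M_u$, which holds verbatim in any commutative ring, so the containments $B_n \subset M_{y(m,n)} B_m$ for $m \leq n$ and the final chain of inclusions ending in $B_{n_s} \subset M_{x_{j+1}\cdots x_s} A_{f(x_{j+1},\dots,x_i)} B_{n_j}$ are derived with the same manipulations. The openness and injectivity of each $T_g$ (provided by \cref{thm_correspondenceLID}) guarantee, as before, that the images $M_{y_n} D_n$ remain open and nonempty, so the pigeonhole on $(t_n)$ delivers the desired configuration.

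The one ingredient that genuinely needs to be replaced is the invocation of \cref{thm_polvdW} at each step of the induction. What is required is its LID analog: for any $\AR$-topological system $X$ with a dense set of additively minimal points, any nonempty open $U \subset X$, and any finite set $F \subset K[x]$ of polynomials (over the field of fractions $K$ of $R$) with $p(0)=0$, there exists $n \in R$ with $p(n) \in R$ for every $p \in F$ such that $\bigcap_{p\in F} A_{p(n)} U \neq \emptyset$. As in the proof of \cref{thm_polvdW}, this reduces by a minimality/compactness argument to a purely combinatorial statement: any finite coloring of $R$ contains $x, n \in R$ with $\{x + p(n) : p \in F\}$ monochromatic. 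This is where I expect the main technical obstacle to lie — specifically, ensuring that denominators can be cleared uniformly (by choosing a common multiple $m \in R\setminus\{0\}$ with $m p(R) \subset R$ for all $p \in F$ and passing to polynomials $\tilde p(z) := -p(mz)$) and that one can appeal to a polynomial van der Waerden theorem for the additive group $(R,+)$. For countable LIDs this follows from the polynomial Hales--Jewett / Bergelson--Leibman theorem applied to the finitely generated additive subgroup generated by the coefficients of the $\tilde p$; for general LIDs one first restricts the coloring to such a subgroup, uses \cref{lemma_pwsLID} to see that piecewise syndeticity interacts well with dilations in $R$, and transfers back. With this polynomial van der Waerden ingredient in hand, the rest of the argument is a direct transcription.
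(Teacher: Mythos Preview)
Your proposal is correct and follows essentially the same approach as the paper: prove an $\AR$-analogue of \cref{thm_maindynamical} and combine it with \cref{thm_correspondenceLID}, the only genuinely new ingredient being a polynomial van der Waerden theorem over $(R,+)$. The paper dispatches this last point by a direct citation to \cite[Proposition~7.5]{Bergelson_Leibman99} rather than by the ad hoc restriction-to-a-subgroup argument you sketch, but otherwise the structure is identical.
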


The only new ingredient needed to run the proof of \cref{thm_maindynamical} in the LID setting is a suitable version of the polynomial van der Waerden theorem; such a version follows from {\cite[Proposition 7.5]{Bergelson_Leibman99}}.

\bibliography{refs-joel}

\begin{thebibliography}{10}

\bibitem{Beiglbock11}
M.~Beiglb{\"o}ck.
\newblock An ultrafilter approach to {J}in's theorem.
\newblock {\em Israel J. Math.}, 185:369--374, 2011.

\bibitem{Beiglbock_Bergelson_Hindman_Strauss06}
M.~Beiglb{\"o}ck, V.~Bergelson, N.~Hindman, and D.~Strauss.
\newblock Multiplicative structures in additively large sets.
\newblock {\em J. Combin. Theory Ser. A}, 113(7):1219--1242, 2006.

\bibitem{Beiglbock_Bergelson_Hindman_Strauss08}
M.~Beiglb{\"o}ck, V.~Bergelson, N.~Hindman, and D.~Strauss.
\newblock Some new results in multiplicative and additive {R}amsey theory.
\newblock {\em Trans. Amer. Math. Soc.}, 360(2):819--847, 2008.

\bibitem{Bergelson87b}
V.~Bergelson.
\newblock Ergodic {R}amsey theory.
\newblock In {\em Logic and combinatorics ({A}rcata, {C}alif., 1985)},
  volume~65 of {\em Contemp. Math.}, pages 63--87. Amer. Math. Soc.,
  Providence, RI, 1987.

\bibitem{Bergelson96}
V.~Bergelson.
\newblock Ergodic {R}amsey theory--an update.
\newblock In {\em Ergodic theory of {${\mathbb Z}^d$} actions}, volume 228 of
  {\em London Math. Soc. Lecture Note Ser.}, pages 1--61. Cambridge Univ.
  Press, Cambridge, 1996.

\bibitem{Bergelson05}
V.~Bergelson.
\newblock Multiplicatively large sets and ergodic {R}amsey theory.
\newblock {\em Israel J. Math.}, 148:23--40, 2005.

\bibitem{Bergelson_Furstenberg_McCutcheon96}
V.~Bergelson, H.~Furstenberg, and R.~McCutcheon.
\newblock I{P}-sets and polynomial recurrence.
\newblock {\em Ergodic Theory Dynam. Systems}, 16(5):963--974, 1996.

\bibitem{Bergelson_Hindman01}
V.~Bergelson and N.~Hindman.
\newblock Partition regular structures contained in large sets are abundant.
\newblock {\em J. Combin. Theory Ser. A}, 93(1):18--36, 2001.

\bibitem{Bergelson_Johnson_Moreira15}
V.~Bergelson, J.~Johnson, and J.~Moreira.
\newblock New polynomial and multidimensional extensions of classical partition
  results.
\newblock Submitted, available at \texttt{http://arxiv.org/abs/1501.02408}.

\bibitem{Bergelson_Leibman96}
V.~Bergelson and A.~Leibman.
\newblock Polynomial extensions of van der {W}aerden's and {S}zemer\'edi's
  theorems.
\newblock {\em J. Amer. Math. Soc.}, 9(3):725--753, 1996.

\bibitem{Bergelson_Leibman99}
V.~Bergelson and A.~Leibman.
\newblock Set-polynomials and polynomial extension of the {H}ales-{J}ewett
  theorem.
\newblock {\em Ann. of Math. (2)}, 150(1):33--75, 1999.

\bibitem{Bergelson_Leibman_Lesigne08}
V.~Bergelson, A.~Leibman, and E.~Lesigne.
\newblock Intersective polynomials and the polynomial {S}zemer\'edi theorem.
\newblock {\em Adv. Math.}, 219(1):369--388, 2008.

\bibitem{Bergelson_Moreira15}
V.~Bergelson and J.~Moreira.
\newblock Ergodic theorem involving additive and multiplicative groups of a
  field and $\{x+y,xy\}$ patterns.
\newblock To appear in \emph{Ergodic Theory Dynam. Systems}, available online
  doi:10.1017/etds.2015.68.

\bibitem{Bergelson_Moreira16}
V.~Bergelson and J.~Moreira.
\newblock Measure preserving actions of affine semigroups and $\{x+y,xy\}$
  patterns.
\newblock Submitted, available at \texttt{http://arxiv.org/abs/1509.07574}.

\bibitem{Bergelson_Moreira16b}
V.~Bergelson and J.~Moreira.
\newblock Van der {C}orput's difference theorem: some modern developments.
\newblock To appear in special issue of \emph{Indag. Math.} dedicted to van der
  Corput, available online doi:10.1016/j.indag.2015.10.014.

\bibitem{Cilleruelo12}
J.~Cilleruelo.
\newblock Combinatorial problems in finite fields and {S}idon sets.
\newblock {\em Combinatorica}, 32(5):497--511, 2012.

\bibitem{Csikvari_Gyarmati_Sarkozy12}
P{\'e}ter Csikv{\'a}ri, Katalin Gyarmati, and Andr{\'a}s S{\'a}rk{\"o}zy.
\newblock Density and {R}amsey type results on algebraic equations with
  restricted solution sets.
\newblock {\em Combinatorica}, 32(4):425--449, 2012.

\bibitem{Frantzikinakis_Host14}
N.~Frantzikinakis and B.~Host.
\newblock Higher order fourier analysis of multiplicative functions and
  applications.
\newblock http://arxiv.org/abs/1403.0945.

\bibitem{Furstenberg77}
H.~Furstenberg.
\newblock Ergodic behavior of diagonal measures and a theorem of {S}zemer\'edi
  on arithmetic progressions.
\newblock {\em J. d'Analyse Math.}, 31:204--256, 1977.

\bibitem{Furstenberg81}
H.~Furstenberg.
\newblock {\em Recurrence in ergodic theory and combinatorial number theory}.
\newblock Princeton University Press, Princeton, N.J., 1981.

\bibitem{Furstenberg_Weiss78}
H.~Furstenberg and B.~Weiss.
\newblock Topological dynamics and combinatorial number theory.
\newblock {\em J. d'Analyse Math.}, 34:61--85, 1978.

\bibitem{Graham_Rothschild_Spencer80}
R.~L. Graham, B.~L. Rothschild, and J.~H. Spencer.
\newblock {\em Ramsey theory}.
\newblock John Wiley \& Sons, Inc., New York, second edition, 1990.

\bibitem{Green_Sanders15}
B.~Green and T.~Sanders.
\newblock Monochromatic sums and products.
\newblock {\em Discrete Analysis}, pages 1--43, 2016:5.

\bibitem{Hanson13}
B.~Hanson.
\newblock Capturing forms in dense subsets of finite fields.
\newblock {\em Acta Arith.}, 160(3):277--284, 2013.

\bibitem{Hindman79}
N.~Hindman.
\newblock Partitions and sums and products of integers.
\newblock {\em Trans. Amer. Math. Soc.}, 247:227--245, 1979.

\bibitem{Hindman_Imre_Strauss03}
N.~Hindman, I.~Leader, and D.~Strauss.
\newblock Open problems in partition regularity.
\newblock {\em Combin. Probab. Comput.}, 12(5-6):571--583, 2003.
\newblock Special issue on Ramsey theory.

\bibitem{Hindman_Strauss98}
N.~Hindman and D.~Strauss.
\newblock {\em Algebra in the {S}tone-\v {C}ech compactification}.
\newblock Walter de Gruyter \& Co., Berlin, 1998.

\bibitem{McCutcheon10}
R.~McCutcheon.
\newblock A variant of the density {H}ales-{J}ewett theorem.
\newblock {\em Bull. Lond. Math. Soc.}, 42(6):974--980, 2010.

\bibitem{Rado33}
R.~Rado.
\newblock Studien zur kombinatorik.
\newblock {\em Math. Zeit.}, 36:242--280, 1933.

\bibitem{Sarkozy78}
A.~S{\'a}rk{\H{o}}zy.
\newblock On difference sets of sequences of integers. {I}.
\newblock {\em Acta Math. Acad. Sci. Hungar.}, 31(1--2):125--149, 1978.

\bibitem{Schur16}
I.~Schur.
\newblock {\"U}ber die kongruenz $x^m+y^m\equiv z^m(\text{mod } p)$.
\newblock {\em Jahresbericht der Deutschen Math. Verein.}, 25:114--117, 1916.

\bibitem{Shkredov10}
I.~D. Shkredov.
\newblock On monochromatic solutions of some nonlinear equations in {$\mathbb
  Z/p\mathbb Z$}.
\newblock {\em Mat. Zametki}, 88(4):625--634, 2010.

\bibitem{Szemeredi75}
E.~Szemer{\'e}di.
\newblock On the sets of integers containing no $k$ elements in arithmetic
  progressions.
\newblock {\em Acta Arith.}, 27:299--345, 1975.

\bibitem{Vinh14}
L.~A. Vinh.
\newblock Monochromatic sum and product in {$\Bbb{Z}/m\Bbb{Z}$}.
\newblock {\em J. Number Theory}, 143:162--169, 2014.

\bibitem{vdWaerden27}
B.L. van~der Waerden.
\newblock Beweis einer baudetschen vermutung.
\newblock {\em Nieuw. Arch. Wisk.}, 15:212--216, 1927.

\end{thebibliography}
\bibliographystyle{plain}
\end{document}